\renewcommand{\P}{\mathbb P}
\DeclareMathOperator{\Gr}{Gr}
\DeclareMathOperator{\Proj}{Proj}
\DeclareMathOperator{\Sing}{Sing}
\newtheorem{thm}{Theorem}
  \newtheorem{cor}[thm]{Corollary}
\newtheorem{lem}[thm]{Lemma}
\newtheorem{prop}[thm]{Proposition}
\theoremstyle{definition}
\newtheorem{prob}[thm]{Problem}
\theoremstyle{remark}
\newtheorem{rem}[thm]{Remark}
\theoremstyle{remark}
\numberwithin{equation}{section}
\numberwithin{thm}{section}
\newcommand{\PP}{{\mathbb{P}}}
\newcommand{\FF}{{\mathbb{F}}}
\newcommand{\C}{{\mathbb{C}}}
\newcommand{\Ce}{{\mathcal{C}}}
\newcommand{\ZZ}{{\mathbb{Z}}}
\newcommand{\oo}{{\mathcal O}}
\newcommand{\fie}{\varphi}
\newcommand{\Xhat}{\hat X}
\newcommand{\Yhat}{\hat Y}
\newcommand{\Xihat}{\hat \Xi}
\newcommand{\Thetahat}{\hat \Theta}
\DeclareMathOperator{\Pf}{Pf}
\DeclareMathOperator{\codim}{codim}
\DeclareMathOperator{\Hom}{Hom}
\DeclareMathOperator{\Pic}{Pic}
\DeclareMathOperator{\Cl}{Cl}
\DeclareMathOperator{\Sec}{Sec}
\DeclareMathOperator{\Aut}{Aut}
\DeclareMathOperator{\Exc}{Exc}
\title[Arithmetically Gorenstein Calabi--Yau threefolds in $\PP^7$]{Arithmetically Gorenstein Calabi--Yau\\ threefolds in $\PP^7$}
\date{\today}
\author[S. Coughlan]{Stephen Coughlan}
\address[SC]{Institut f\"ur Algebraische Geometrie, Leibniz Universit\"at Hannover, Welfengarten 1, D-30167 Hannover, Germany}
\curraddr{Mathematisches Institut, Lehrstuhl Mathematik VIII, Universit\"at Bayreuth,
Universit\"atsstrasse 30, D-95447 Bayreuth, Germany}
\email{stephencoughlan21@googlemail.com}
\author[{\L}. Go{\l}{\c e}biowski]{{\L}ukasz Go{\l}{\c e}biowski}
\address[{\L}G]{Jagiellonian University Cracow}
\email{l.turbo.golebiowski@gmail.com}
\author[G.~Kapustka]{Grzegorz Kapustka}
\address[GK]{Jagiellonian University Cracow, University of Z\"urich}
\email{grzegorz.kapustka@uj.edu.pl}
\author[M.~Kapustka]{Micha{\l} Kapustka}
\address[MK]{University of Stavanger, Department of Mathematics and Natural Sciences, NO-4036 Stavanger, Norway}
\email{michal.kapustka@uis.no}
\begin{document}

\begin{abstract} 
We present a list of arithmetically Gorenstein Calabi--Yau threefolds in $\PP^7$ and give evidence that this is a complete list.
In particular we construct three new families of arithmetically Gorenstein Calabi--Yau threefolds in $\PP^7$ for which no mirror construction is known.
\end{abstract}

\maketitle

%\setcounter{tocdepth}{2}
%\tableofcontents

\section{Introduction}
By a \emph{Calabi--Yau} threefold we mean a smooth three dimensional compact complex projective manifold $X$ with $K_X=0$ and $h^1(\oo_X)=0$. 
  It is known that Calabi--Yau threefolds embedded in $\PP^5$ are complete intersections; either of two cubics, or of a quadric and a quartic, or of a quintic and a hyperplane. In \cite{KK1} we studied Calabi--Yau threefolds embedded in $\PP^6$ giving the complete classification of the examples of degree $d\leq 14$ (see also \cite{Ton}).
  Note that the degree of such threefolds in $\PP^6$ is bounded by $41$, but the highest known degree is $17$ (see \cite{KK2}).
  It is an interesting problem to find the highest degree that a Calabi--Yau threefold in $\PP^6$ can have.
  Recall also that all smooth threefolds (so also all Calabi--Yau threefolds) can be smoothly projected to $\PP^7$. In particular all Calabi--Yau threefolds can be embedded in $\PP^7$.
  
  The aim of this article is to study Calabi--Yau threefolds of codimension $4$ in $\PP^7$ that are projectively normal. More precisely we study Calabi--Yau threefolds $X\subset \PP^7$ such that the homogeneous coordinate ring $S(X) = \C[x_0, \dots , x_7]/ I_X$ (where $I_X$ is the saturated ideal of $X$) is a Cohen--Macaulay ring. This is equivalent to saying that $h^i(\mathcal{I}_X(j))=0$ for $0<i<\codim X$. A priori, arithmetically Cohen--Macaulay is a weaker condition than arithmetically Gorenstein, but for Calabi--Yau threefolds, the two are equivalent. Thus we consider arithmetically Gorenstein (aG) Calabi--Yau threefolds in $\PP^7$.
  
Obtained examples extend the lists of known Calabi--Yau threefolds described for example in \cite{IIM}, \cite{GKM}, \cite{K2}, \cite{C}, \cite{CCM}, \cite{BKZ}, \cite{QS}. Furthermore, the considered Calabi--Yau threefolds have simple algebraic descriptions and in this way are natural test examples for the mirror symmetry conjecture (cf.~\cite{BP}, \cite{JKLM}). 
 
  Another motivation for studying codimension 4 aG submanifolds is the search for a general structure theorem.
 We say that a submanifold $X \subset \PP^n$ is subcanonical if $K_X = \oo_X (k)$
for some $k \in \ZZ$. 
By the so-called Serre construction, a codimension $2$ subcanonical submanifold is always the zero locus of a section of a rank two vector bundle $E$. If it is additionally aG then then it is a complete intersection. 
A codimension $3$ subcanonical manifold $X$ is Pfaffian see \cite{O2}, \cite{W}, i.e.
it is the first nonzero 
degeneracy locus of a skew-symmetric morphism of vector bundles of odd rank $E (-t) \to E$ for some $t \in \ZZ$. In this case, $X$ is given locally by the vanishing of $2u\times 2u$ Pfaffians of an alternating map from the vector bundle $E$ of odd rank $2u + 1$ to its twisted dual such that we have the following resolution:
$$0\to \oo_{\PP^n}(-2s-t)\to  E (-s-t)\to E(-s)\to \mathcal{I}_X \to 0 ,$$
where $s = c_1(E) + ut$. If a subcanonical codimension $3$ manifold is additionally aG, then $E$ can be chosen to be a sum of line bundles.
Note that Tonoli in his PhD thesis \cite{Ton} constructed examples of Calabi--Yau threefolds in $\PP^6$ using the Pfaffian resolution; see also \cite{KK3}.

Recently, Reid \cite{R} described the shape of a projective resolution for a codimension 4 Gorenstein ideal.
More precisely, if $X$ is an aG manifold of codimension 4 then the ideal $\mathcal{I}_X$ admits a free resolution of the form:
\begin{equation} \label{resolution4}
0\rightarrow P_4\rightarrow P_3\xrightarrow{M'} P_2\xrightarrow{M} P_1\xrightarrow{L} \mathcal{I}_X\rightarrow 0,
\end{equation}
where $P_4=\oo(-k)$ for some integer $k$, $P_i\cong\Hom(P_{4-i},P_4)$, $M=(A,B)$ is a $(k+1)\times 2k$ matrix with polynomial entries  made of two blocks $A,B$ 
satisfying  $A(B^t)+B(A^t)=0$ and $M'=\left(\begin{smallmatrix}B^t\\A^t\end{smallmatrix}\right)$.
Conversely, if $M=(A,B)$ is a matrix as above for which the rank $<k$ locus $D_{k-1}$ satisfies $\operatorname{codim} D_{k-1}\geq 4$,
then there exists an arithmetically Gorenstein $X$ of codimension 4 with resolution  (\ref{resolution4}).
As Reid observes, it is difficult to find matrices $M$ a priori satisfying the above rank condition. It is even an interesting problem to write out $M$ a posteriori for some of the examples in our list.
%We were however, not able to apply this result in order to obtain new Calabi--Yau threefolds in codimension $4$ because of the very high complexity of calculations. 

In Table 1 we present a list of examples of aG Calabi--Yau threefolds in $\PP^7$, that we conjecture
to be a complete list. 
We hope that this list provides working examples that can be used to further develop the analysis from \cite{R} of the resolution of the ideal of $X$ and the matrices $A$ and $B$. 
The methods of construction are: bilinkages \cite{PS}, unprojections \cite{Reid Kinosaki}, \cite{PR} and the Serre construction in scrolls.
We complete the list of known examples presented in \cite{B}, \cite{BG}, \cite{Ku0}, \cite{Ku}, \cite{GK-jl}.
Moreover, we study the birational models of our examples.
Finally, we announce partial classification results for aG Calabi--Yau threefolds in $\PP^7$.

We have used computer algebra packages Magma \cite{Magma} and Macaulay2 \cite{M2} in various places, which are highlighted in the text.

%Our approach to this problem is a development of the methods from \cite{EH}. 

{\bf Acknowledgements.} We thank Ch.~Okonek for useful discussions and advice. We thank S.~Cynk, S.~Galkin, A.~Kresch and K.~Ranestad for useful comments.  S. Coughlan was partially supported by the DFG through
grant Hu 337-6/2. {\L}.~Go{\l}{\c e}biowski, G.~Kapustka, M.~Kapustka are supported by NCN grant 2013/10/E/ST1/00688. Part of this work was done during the Polish Algebraic Geometry mini-Semester (miniPAGES), which was supported by the grant 346300 for IMPAN from the Simons Foundation and the matching 2015--2019 Polish MNiSW fund. 
\section{Preliminaries}
\subsection{Degree bound}
Let $X$ be a nonsingular aG Calabi--Yau 3-fold in $\PP^7$.
\begin{lem} The degree of $X$ takes values between $14$ and $20$.
\end{lem}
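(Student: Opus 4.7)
The two ends of the range come from entirely different inputs, so I would handle them separately: the upper bound from counting quadrics through $X$ via Riemann--Roch, the lower bound from Castelnuovo's genus bound applied to a general linear curve section.

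\emph{Upper bound.} The plan is to exploit projective normality (the aG hypothesis) together with $K_X=0$. Non-degeneracy of $X\subset\PP^7$ and the vanishing $h^1(\I_X(1))=0$ give $h^0(\oo_X(1))=8$. Kodaira vanishing kills $h^i(\oo_X(m))$ for $i>0$ and $m\geq1$, so $h^0(\oo_X(m))=\chi(\oo_X(m))$. Riemann--Roch on a Calabi--Yau threefold collapses to $\chi(\oo_X(m))=\tfrac{d}{6}m^3+\tfrac{H\cdot c_2}{12}m$; at $m=1$ this determines $H\cdot c_2=96-2d$, and at $m=2$ it yields $h^0(\oo_X(2))=d+16$. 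The aG vanishing $h^1(\I_X(2))=0$ then gives $h^0(\I_X(2))=36-(d+16)=20-d$, and non-negativity forces $d\leq 20$.

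\emph{Lower bound.} The plan is to slice $X$ with a general codimension-$2$ linear subspace $L\cong\PP^5$ and apply Castelnuovo to $C:=X\cap L$. By Bertini $C$ is smooth and irreducible; projective normality of $X$ is inherited by its general linear sections, so $C$ is aG in $L$ and in particular non-degenerate. Adjunction applied twice (using $K_X=0$) gives $\omega_C=\oo_C(2H|_C)$, hence $g(C)=d+1$. Castelnuovo's bound for a non-degenerate smooth curve in $\PP^5$ states $g(C)\leq\pi(d,5)=4\binom{m}{2}+m\epsilon$ where $d-1=4m+\epsilon$ with $0\leq\epsilon<4$. A short table shows $\pi(13,5)=12<14=g(C)$, with the discrepancy widening for smaller $d$, while at $d=14$ the bound is $15=g(C)$ and is first realised. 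Hence $d\geq 14$.

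\emph{Main obstacle.} Both steps are arithmetic once one grants the standard toolkit (Hirzebruch--Riemann--Roch, Kodaira vanishing, Bertini and Castelnuovo). The only delicate point is verifying that $C$ is non-degenerate in $L$, and that projective normality really does pass to a general codimension-$2$ linear section; both follow from standard facts about aG ideals, and amount to checking that restriction $H^0(\oo_{\PP^7}(1))\to H^0(\oo_C(1))$ remains surjective, forcing $C$ to span $L$.
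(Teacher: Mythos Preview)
Your upper bound is exactly the paper's argument: Riemann--Roch plus $h^0(\oo_X(1))=8$ gives $2d+H\cdot c_2=96$, and then $h^0(\oo_X(2))\le 36$ forces $d\le 20$.

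For the lower bound you take a genuinely different route. The paper slices once, obtaining a surface $S\subset\PP^6$ with $K_S=H|_S$, hence canonically embedded with $p_g=7$ and $K_S^2=d$; the Castelnuovo inequality $K_S^2\ge 3p_g-7$ for surfaces of general type with birational canonical map then gives $d\ge 14$ in one line. You slice twice to a curve $C\subset\PP^5$ of genus $d+1$ and invoke the classical Castelnuovo genus bound $\pi(d,5)$. Both arguments are correct; indeed the surface inequality is ultimately proved by exactly your curve-section manoeuvre, so the two are close cousins. Your version has the virtue of being self-contained from first principles (one need only quote the curve bound, not the surface consequence), while the paper's version is terser and avoids the bookkeeping of checking non-degeneracy through two successive sections. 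Your handling of that bookkeeping is fine: the aG hypothesis gives $h^1(\I_X(j))=0$ for all $j$, and the standard exact sequence $0\to\I_X(j-1)\to\I_X(j)\to\I_{X\cap H/H}(j)\to 0$ then propagates both non-degeneracy and projective normality down through general hyperplane sections.
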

\begin{proof}
Kodaira vanishing and the Riemann--Roch formula for $X$ polarised by $A$ gives 
$$h^0(mA)=\frac{1}{6} (mA)^3+\frac{1}{12} mA \cdot c_2.$$
Since $X\subset\P^7$ is projectively normal and $h^0(A)=8$, we get the following relationship between Chern classes of $X$
\[2A^3+A\cdot c_2=96.\]
Again by projective normality, $h^0(2A)\leq h^0(\mathcal{O}_{\PP^7}(2))=\left(\begin{smallmatrix}7+2\\ 7\end{smallmatrix}\right)=36$, so we get $A^3\leq 20$. The lower bound for $A^3$ is $14$, coming from the Castelnuovo inequality for surfaces of general type with birational canonical map.
\end{proof}
%\begin{lem}\label{ideal} The number of quadrics in the ideal of aG Calabi--Yau threefold $X$ in $\PP^7$ of degree $d$ is $20-d$
%i.e.~$h^0(\mathcal{I}_X(2))=20-d$. Moreover the ideal of $X\subset \PP^7$ is generated by quintics.
%\end{lem}
%\begin{proof} The second part follows from the Castelnuovo--Mumford criterion.
%\end{proof}
%Note that the degree $14$ example in Table 1 needs two quartic generators.
%We expect that quartics are enough to generate the ideal of an aG Calabi--Yau in $\PP^7$.
\subsection{Bilinkage}
Our basic tool of construction will be bilinkage.
The fundamental paper of Peskine and Szpiro \cite{PS} introduced modern foundations for the theory of linkage:
Let  $U$, $V$ be reduced schemes with no common components, and suppose that $U$ and $V$ are 
contained in an arithmetically Gorenstein scheme $G\subset\PP^n$. We say that $U$ and $V$ are 
\emph{directly linked} if their union is a complete intersection $F$ in $G$ (for general subschemes $F$, the 
correct definition is that $\mathcal{I}_F : \mathcal{I}_U =\mathcal{I}_V$ and $\mathcal{I}_F :\mathcal{I}_V=
\mathcal{I}_U$). Sometimes we say that $U$ is linked to $V$ through $G$. This defines an equivalence 
relation, which we call \emph{Gorenstein linkage}. We say that two reduced schemes $U$ and $W$ are 
\emph{bilinked} if they can be linked in two steps.

\subsection{Hodge numbers}

Bertin \cite{B} showed how to compute the Hodge numbers of some Calabi--Yau threefolds, and this method can be applied to some of the families in Table 1. Another way to find Hodge numbers appears in \cite{DFF}, and this works by computing the graded pieces of the deformation space $T^1$. Unfortunately the complexity of these computations becomes very high when the degree is greater than $17$. The examples obtained by bilinkage can also be constructed using unprojection, in which case we compute the Hodge numbers using \cite{GK-jl}. All the above methods rely on an explicit understanding of the defining ideal of the variety. In some cases, we can also compute Hodge numbers directly from the construction.

\section{The table}
In Table 1 we give possibly the complete list of aG Calabi--Yau threefolds in $\PP^7$. In the table and elsewhere, $\Pf_{13} \subset \PP^7$ is a codimension $3$ manifold defined by a
$5 \times 5$ skew-symmetric matrix with one row of quadrics, $F_1$ is the del Pezzo threefold
$(1, 1) \subset \PP^2 \times \PP^2$, and $F_2$ is the del Pezzo threefold $\PP^1 \times \PP^1 \times \PP^1$.

\renewcommand{\arraystretch}{1.3}\begin{table}[!ht] 
\begin{tabular}{ccccc}
\hline
No. & Deg. & $h^{1,1}$ & $h^{1,2}$ & Description \\
\hline
1 & 14 & 2 & 86 & $(2,4)$ divisor in $\PP^1\times \PP^3$ \\
%\hline
2 & 15 & 1 & 76 & $G(2,5)\cap X_3\cap \PP^7$ \\
%\hline
3 & 16 & 1 & 65 & $X_{2,2,2,2}$ \\
%\hline
4 & 17 & 1 & 55 &  bilinked on $Y_{2,2,2}$ to $\PP^3$ \\
%\hline
5 & 17 & 2 & 58 & $2\times 2$ minors of a $3\times 3$ matrix with degrees $\left(\begin{smallmatrix}1 & 1 & 1 \\ 1 & 1 & 1 \\ 2 & 2 & 2 \end{smallmatrix}\right)$ \\
%\hline
6 & 17 & 2 & 54 & rolling factors, codim 2 in cubic scroll  \\
%\hline
%7& 17 & 2& 85 & birational to a nodal quintic in $\PP^4$ \\
%\hline
7 & 18 & 1 & 46 & bilinked on $Y_{2,2,3}\subset \PP^7$ to $F_1$ 

\\
% \hline
8 & 18 & 1 & 45& bilinked on $Y_{2,2,3}\subset \PP^7$ to $F_2$ 
\\ 
%\hline
9& 19 & 2 & 37 & bilinked on special $\Pf_{13}$ to $F_1$ \\
10& 19 & 2 & 36 & bilinked on special $\Pf_{13}$ to $F_2$ \\

% \hline
%9 & 19 & 2? & ?? & complete intersection in the cubic scroll \\
%\hline
11 & 20 & 2 & 34 & $3\times 3$ minors of $4\times 4$ matrix with linear forms in $\PP^7$\\
\hline
\end{tabular}
\caption{aG Calabi--Yau threefolds in $\PP^7$}
\end{table}
Examples $2$, $4$, $5$ and $11$ were studied in \cite{B}, examples $4$, $7$ and $8$ in \cite{GK-jl} and $1$ in \cite{Ku}.
Examples $6$, $9$ and $10$ are new families of Calabi--Yau threefolds. 
We will give a more precise description of each family in section \ref{Ex}.
We study the geometry of these examples, describing their birational models and their K\"ahler cones (in \cite{CGKK} we shall complete this analysis). In order to study the examples of degree $19$ we use the Cremona transform defined by quadrics containing the Segre embedding of $\PP^2\times \PP^2$ in $\PP^8$.
Our main result in \cite{CGKK} is that Table 1 gives the complete list of aG Calabi--Yau threefolds in $\PP^7$ of degree $\leq 17$.
%The method of proof is a development of that used in \cite{EH}.
%The idea is to consider the so-called \emph{quadric hull} of $X\subset\PP^7$. That is, the intersection of quadrics containing $X$. 
%In particular, in degree $17$ there are three quadrics containing $X$ and their intersection is either a cone over a cubic scroll or is of codimension $3$. In the first case we obtain three Calabi--Yau threefolds two of which are aG in $\PP^7$.
We have evidence that our Table is complete for degrees $18$, $19$ and $20$. 
\begin{prob} Does Table 1 give the complete classification of aG Calabi--Yau threefolds in $\PP^7$?
\end{prob}

\section{Examples}\label{Ex}
In this section we discuss case by case the examples described in Table 1.

\subsection{Degree 14} 
%First, $D_4\subset \PP^7$ is contained in a 6 dimensional system of quadrics.
%It follows from Lemma \ref{ideal} that $X\subset D_4$ have degree $14$.
The anti-canonical divisor of $X_{14}\subset \PP^1\times \PP^3$ is a Calabi--Yau threefold of degree $14$.
%We can consider degenerations of $X_{14}$ contained in cones over a section of $\PP^1\times \PP^3$.
To see that $X_{14}\subset \PP^7$ is aG, we use the long cohomology exact sequence associated to
$$0\to \mathcal{I}_X\to \mathcal{I}_{\PP^1\times \PP^3} \to\mathcal{I}_{X | \PP^1\times\PP^3}\to 0,$$
and the fact that $\mathcal{I}_{X | \PP^1\times\PP^3}=\oo_{\PP^1\times\PP^3}(-2,-4)$.
Note that this threefold was considered in \cite[Thm.~5.4]{Ku} and we deduce the following:
\begin{prop}\label{deg14} The Picard number of $X_{14}$ is $2$ i.e.~$h^{1,1}=2$ and $h^{1,2}=86$.
One extremal contraction is a K3 fibration whereas the other is a contraction of $64$ rational curves.
The minimal resolution of the ideal defining $X_{14}\subset \PP^7$ is
\[
0\to \oo(-8)\to 6\oo(-6)\oplus 3\oo(-4)\to 8\oo(-5)\oplus 8\oo(-3)\to3\oo(-4)\oplus 6\oo(-2)\to \mathcal{I}_{X_1}\to 0.
\]
\end{prop}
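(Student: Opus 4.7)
I would organize the proof in three essentially independent parts.

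For the Hodge numbers: since $-K_Y=\oo_Y(2,4)$ for $Y=\PP^1\times\PP^3$, the Calabi--Yau threefold $X$ is an ample divisor in $Y$, so the Lefschetz hyperplane theorem (with $\dim X=3$) gives $H^2(Y,\ZZ)\cong H^2(X,\ZZ)$. Therefore $h^2(X)=h^2(Y)=2$, and combined with $h^{2,0}(X)=0$ for a Calabi--Yau threefold this yields $h^{1,1}(X)=\rho(X)=2$. The second Hodge number follows from the Euler characteristic. Writing $h_i=p_i^*H$, with $h_1^2=h_2^4=0$ and $\int_Y h_1 h_2^3=1$, the formula $c(TX)=c(TY)|_X/c(\oo_Y(2,4)|_X)$ applied to $c(TY)=(1+2h_1)(1+h_2)^4$ gives, after a short expansion,
\[
\chi(X)=\int_Y (2h_1+4h_2)\cdot c_3(TX)=-168.
\]
The Calabi--Yau identity $\chi(X)=2(h^{1,1}-h^{1,2})$ then yields $h^{1,2}(X)=86$.

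For the extremal contractions: both projections $\pi_i=p_i|_X$ are morphisms. The fibre of $\pi_1:X\to\PP^1$ over a general $t$ is a smooth quartic surface in $\PP^3$, hence a K3, so $\pi_1$ is a K3 fibration. To analyse $\pi_2:X\to\PP^3$, write the defining equation as $f=a_2(y)s^2+a_1(y)st+a_0(y)t^2$ with $[s:t]$ the $\PP^1$-coordinates and $a_i$ quartics on $\PP^3$. The fibre over $p\in\PP^3$ equals the whole $\PP^1$ precisely when $a_0(p)=a_1(p)=a_2(p)=0$, which by Bezout defines $4\cdot 4\cdot 4=64$ points for generic $f$. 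A local transversality check at each such point shows that the corresponding fibres are disjoint smooth rational curves in $X$. Since $\rho(X)=2$, the Mori cone $\overline{NE}(X)$ has exactly two extremal rays, spanned respectively by a line in a fibre of $\pi_1$ and by one of the $64$ rational curves; the associated contractions are $\pi_1$ (a K3 fibration) and the small contraction of these $64$ curves, as claimed.

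For the minimal free resolution: the ideal $\mathcal{I}_Y\subset\oo_{\PP^7}$ of the Segre variety has the Eagon--Northcott resolution
\[
0\to 3\oo(-4)\to 8\oo(-3)\to 6\oo(-2)\to \mathcal{I}_Y\to 0,
\]
and the short exact sequence $0\to\mathcal{I}_Y\to\mathcal{I}_X\to\oo_Y(-2,-4)\to 0$ (arising from $\mathcal{I}_X/\mathcal{I}_Y=\oo_Y(-2,-4)$) reduces the computation to resolving $\oo_Y(-2,-4)$ on $\PP^7$. Its minimal generators live in the smallest $n$ for which $H^0(\oo_Y(n-2,n-4))$ is nonzero, namely $n=4$, with multiplicity $h^0(\oo_Y(2,0))=3$; this accounts for the summand $3\oo(-4)$ in $P_1$. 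Taking a mapping cone of the two resolutions and minimising then yields the stated resolution. Its self-dual shape $P_i\cong\Hom(P_{4-i},P_4)$ with $P_4=\oo(-8)$ is consistent with Reid's codimension-$4$ format in \eqref{resolution4}: here $k=8$, matching $\omega_X=\oo_X(k-8)=\oo_X$.

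The main obstacle is the third step: producing the resolution of $\oo_Y(-2,-4)$ on $\PP^7$ (for instance via a Weyman-style pushforward from $Y$ to $\PP^7$, or via BGG) and verifying that no further cancellations occur after the mapping cone. In practice, this is most efficiently carried out by a direct Macaulay2 computation, in line with the paper's announced use of computer algebra. The Hodge number computation is routine once the Chern class expansion is organised, and the extremal-contraction analysis reduces to a Bezout count together with a generic local transversality verification.
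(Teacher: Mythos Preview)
Your argument is correct and follows essentially the same route as the paper for the parts the paper actually proves: Lefschetz for $h^{1,1}=2$, an Euler characteristic computation giving $e(X)=-168$ hence $h^{1,2}=86$, and the identification of the two projections as the extremal contractions. The differences are that you compute $e(X)$ by an explicit Chern class expansion whereas the paper invokes the toric hypersurface method (Prop.~\ref{prop!toric-hodge}), and you obtain the $64$ contracted curves by a direct B\'ezout count on the coefficients $a_0,a_1,a_2$ of the $(2,4)$ equation, whereas the paper cites \cite[Thm.~4.9]{Ku}. Your version is more self-contained; the paper's is shorter by outsourcing.

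For the minimal free resolution, note that the paper gives no proof at all of this part, so your mapping-cone sketch (Eagon--Northcott for $\PP^1\times\PP^3$ combined with a resolution of $\oo_Y(-2,-4)$) goes beyond what the paper does. Your honest acknowledgement that the final cancellation step is best verified by Macaulay2 is in keeping with the paper's declared methodology.
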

\begin{proof} From the Lefschetz hyperplane theorem, we know that $h^{1,1}(X_{14})=2$ because $\Pic(\PP^1\times\PP^3)=\ZZ\otimes\ZZ$. Since $X_{14}$ is a hypersurface in a toric
variety (cf.~Prop.~\ref{prop!toric-hodge}), we can compute the Euler number $e(X_{14})=-168$. Then since $e(X)=2(h^{1,1}-h^{1,2})$ for a Calabi--Yau threefold, we get $h^{1,2}(X)=86$.
%In order to compute $h^{1,2}$ we use the Kuhnel theorem \cite[Lemma 4.2]{Kuhnel}. To do it, at the first
%we note that the space $\PP^1\times\PP^3$ can be described as a two rank vector bundle $\mathcal{E}$ over $\PP^3$. More preciesly, let us consider the projective bundle $\PP(\mathcal{E})\rightarrow\PP^3$, where 
%$\mathcal{E}=\oo_{\PP^3}^{\oplus 2}$. A part of Kuhnel's lemma says that $c_3(X_{14})=-8\gamma(\mathcal{E})-168$ where $\gamma(\mathcal{E}):=(c^2_1(\mathcal{E})-4c_2(\mathcal{E})).h$ and $h$ is the hyperplane class in $\PP^2$. Because 
%$c_1(\mathcal{E})=c_2(\mathcal{E})=0$, then $\gamma(\mathcal{E})=0$ and hence $c_3(X_{14})=-168$. By the Gauss-Bonnet theorem $c_3(X_{14})=\chi(X_{14},\mathbb{C})$ and hence $h^{1,2}=86$.
From \cite[Theorem 4.9]{Ku}, $X_{14}$ contains exactly $64$ rational curves that are contracted by the projection to $\PP^3$. The projection to $\PP^1$ gives a K3 fibration. 
\end{proof}
%\begin{rem}
%From the Riemann--Roch theorem, the ideal sheaf $\mathcal{I}_{14}$ is generated by 6 quadrics and 3 quartics, since $h^0(\mathcal{I}_X(2))=20-d$, $h^0(\mathcal{I}_X(4))=298-10d$.
%\end{rem}
\subsection{Degree $15$}
Let $Y$ denote a linear $\PP^7$ section of the Pl{\"u}cker embedding of the Grassmannian $\Gr(2,5)$ in $\PP^9$. The Lefschetz hyperplane theorem implies that the Picard group of $Y$ is generated by the hyperplane section $H$ on $\PP^7$, and moreover, the canonical bundle of $Y$ is $-3H$. It follows that the intersection of $Y$ with a cubic is a Calabi--Yau threefold. This classical example has Picard number $1$ and is studied more precisely in \cite{B}. They are aG since the Pl\"ucker embedding of the Grassmannian $\Gr(2,5) $ is aG.
 
\subsection{Degree $16$}
This is the intersection of four quadrics in $\PP^7$. Smoothness follows from the Bertini theorem and the canonical divisor is computed from the adjunction formula.
This example is aG from the cohomology exact sequence associated to the restriction exact sequence.
%$$0\to \mathcal{I}_X\to \oo_{\PP^7}\to \oo_{X}\to 0.$$

\subsection{Degree $17$ no.~$4$}\label{deg17no4}
Let $\Pi\cong\PP^3$ and consider a linear embedding of $\Pi$ in $\PP^7$.
We can perform a bilinkage of $\Pi$ inside the complete intersection $Y=Y_{2,2,2}\subset \PP^7$ of three quadrics containing $\Pi$ by first linking through a linear section and then through a cubic. Since $\Pi\subset \PP^7$ is aG and bilinkage preserves this property, we obtain in this way an aG threefold $X$ of degree $17$ in $\PP^7$.
%In order to prove that $X$ is Calabi--Yau, it is enough to observe that $K_Y=\oo_{Y}(-2)$
%and $K_{\PP^3}=\oo_{\PP^3}(-4)$, and then apply the adjunction formula (cf.~Proposition \ref{18}).
Furthermore, from the mapping cone construction we can find a free resolution of $I_X$ and in consequence compute $K_X$.

We proceed as follows: the ideal defining the complete intersection $S_{2,2,2,1}$ has the Koszul resolution
\[
\mathcal{S}\colon0\rightarrow\oo(-7)\rightarrow 3\oo(-5)\oplus\oo(-6)\rightarrow3\oo(-3)\oplus 3\oo(-4)\rightarrow\oo(-1)\oplus 3\oo(-2)\rightarrow\mathcal{I}_{S_{2,2,2,1}}\rightarrow 0.
\]
On the other hand, the Koszul resolution of the ideal defining $\Pi$ is
\[\mathcal{F}\colon0\rightarrow\oo(-4)\rightarrow 4\oo(-3)\rightarrow 6\oo(-2)\rightarrow 4\oo(-1)\rightarrow \mathcal{I}_{\Pi}\rightarrow 0.\]

The inclusion of ideals $I_S\subset I_{\Pi}$ induces a map $\alpha\colon\mathcal{S}\to\mathcal{F}$, and a slightly modified version of the mapping cone of $\alpha^{\vee}(-7)$ gives a resolution of the ideal defining the linked variety $G$ (cf.~\cite[Prop.~1.10]{No}):
\begin{align*}
\mathcal{G}\colon0\rightarrow4\oo(-6)\rightarrow \oo(-6)\oplus 9\oo(-5)\rightarrow 7\oo(-4)&\oplus3\oo(-3)\rightarrow \\
& \oo(-3)\oplus 3\oo(-2)\oplus\oo(-1)\rightarrow \mathcal{I}_{G}\rightarrow 0.
\end{align*}
Now, we can take the cubic in $\mathbb{P}^7$ which contains $G$ and perform a second linkage. Note that for the complete intersection $T_{2,2,2,3}$, the Koszul complex is
\[
\mathcal{T}\colon0\rightarrow\oo(-9)\rightarrow 3\oo(-7)\oplus\oo(-6)\rightarrow 3\oo(-4)\oplus3\oo(-5)\rightarrow 3\oo(-2)\oplus \oo(-3)\rightarrow \mathcal{I}_{T_{2,2,2,3}}\rightarrow 0.
\]
Then the mapping cone of $\beta^\vee(-9)$ for $\beta\colon\mathcal{T}\to\mathcal{G}$ gives the free resolution of $\mathcal{I}_X$:
\begin{align*}
\mathcal{X}'\colon0\rightarrow\oo(-8) \oplus 3\oo(-7)&\oplus\oo(-6)\rightarrow 3\oo(-7)\oplus4\oo(-6)\oplus 7\oo(-5)\rightarrow \\
& 3\oo(-5)\oplus 12\oo(-4)\oplus\oo(-3)\rightarrow 5\oo(-3)\oplus 3\oo(-2)\rightarrow\mathcal{I}_{X}\rightarrow 0.
\end{align*}

To get a minimal resolution we need to quotient out terms corresponding to maps of degree 0. Since the three quadrics and the cubic defining $T$ form part of a minimal set of generators for the ideal of $G$, we can replace the last term $\oo(-8)\oplus 3\oo(-7)\oplus2\oo(-6)$ in $\mathcal{X'}$ by the dual to the cokernel of the inclusion $[3\oo(-2)\oplus\oo(-3)\to\mathcal{G}_1]$ twisted by $\oo(-9)$, which is simply $\oo(-8)$ (see \cite[Exercise 21.23]{E}). Moreover, since the quadrics in the ideal of $X$ are a subset of generators of the complete intersection $T$, they do not admit any linear syzygy. It follows that the syzygy $\oo(-3)\rightarrow 5\oo(-3)\oplus 3\oo(-2)$ eliminates a cubic generator of $\mathcal{I}_X$, and hence the corresponding part can be removed from the resolution. We finally remove the $3\oo(-5)$ from the central term, because $X$ is aG so the minimal resolution must be symmetric. In the end we get the minimal resolution:
\begin{align*}
\mathcal{X}\colon0\rightarrow\oo(-8) \rightarrow 3\oo(-6)\oplus  4\oo(-5)\rightarrow &12\oo(-4)\rightarrow3\oo(-2)\oplus4\oo(-3)\rightarrow\mathcal{I}_{X}\rightarrow 0.
\end{align*}
In particular, since $K_{X}=\mathrm{Ext}^4_R(R/I_{X},R)(-8)$ and $I_{X}=\mathrm{Ann}_R(K_{X})$, we deduce that $K_{X}\cong\oo_{X}$.
 From the resolution we obtain also $h^1(X,\oo_X)=0$. Finally, smoothness of $X$ follows from computer calculation of one example.  
\begin{rem} The above example together with its minimal free resolution was considered in \cite{B}. It is a Calabi--Yau threefold of degree 17 in $\PP^7$ with resolution of Kustin--Miller type. It was also constructed using unprojection of a del Pezzo of degree $5$ in a complete intersection $2,2,3$ in \cite{GK-jl}.
\end{rem}
\begin{lem} The generic element of the deformation family of $X$ is obtained via the bilinkage construction as above.
\end{lem}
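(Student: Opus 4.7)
The plan is to show that the bilinkage construction can be reversed for a generic deformation $X'$ of $X$ in $\PP^7$: starting from $X'$, I would reconstruct an intermediate linked scheme $G'$ and then a linear $\PP^3$, denoted $\Pi'$, and verify that they assemble into bilinkage data producing $X'$. This realises $X'$ as a generic member of the bilinkage family.

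First, I would reverse the second linkage. From the minimal resolution $\mathcal{X}$ one reads that $\I_X$ has three quadric and four cubic minimal generators. Since the Hilbert function is constant in a flat family and Betti numbers are upper semicontinuous, $\I_{X'}$ still contains three quadrics $Q_1', Q_2', Q_3'$ and at least four independent cubics modulo these quadrics. Choosing a generic cubic $C'\in H^0(\I_{X'}(3))$ so that $T'=V(Q_1',Q_2',Q_3',C')$ is a complete intersection of type $(2,2,2,3)$ containing $X'$, direct linkage of $X'$ inside $T'$ yields a residual subscheme $G'$ of degree $24-17=7$.

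Next, I would reverse the first linkage. The mapping cone construction of Peskine--Szpiro applied to the resolutions of $\I_{T'}$ and $\I_{X'}$ produces a resolution of $\I_{G'}$ of the same shape as $\mathcal{G}$, provided $X'$ has the same Betti numbers as $X$ (which holds on a Zariski open neighbourhood of $[X]$ in the Hilbert scheme). In particular $\I_{G'}$ has one linear form $L'$, three quadrics, and one cubic among its minimal generators. After a base change, the three quadrics may be taken to be $Q_1', Q_2', Q_3'$, and then $S'=V(L',Q_1',Q_2',Q_3')$ is a complete intersection of type $(1,2,2,2)$ containing $G'$. Linking $G'$ in $S'$ yields a residual subscheme $\Pi'$ of degree $8-7=1$, which is thus a linear $\PP^3$. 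Retracing the construction exhibits $X'$ as the bilinkage of $\Pi'$ through $S'$ and then $T'$, as desired.

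The main obstacle is ensuring genericity at each step: one must check that $T'$ and $S'$ are genuine complete intersections (i.e., that the chosen forms are regular sequences), that $G'$ and $\Pi'$ are reduced and have no common component with the next ambient complete intersection (so that the Peskine--Szpiro definition of direct linkage applies and produces $\I_F:\I_U=\I_V$), and that $\Pi'$ is literally a linearly embedded $\PP^3$ rather than a non-reduced degree-one subscheme. All these conditions are open and satisfied on the special fibre $X$ by construction; hence they remain valid for $X'$ in a Zariski open neighbourhood of $[X]$ in the deformation family, finishing the argument.
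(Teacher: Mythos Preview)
Your argument is correct but takes a genuinely different route from the paper. The paper proves the lemma by a parameter count: it computes the dimension of the family of bilinkage data modulo automorphisms to be $22+33=55$, and matches this against $h^{1,2}(X)=55$ (taken from \cite{B}); since the Kuranishi space of a smooth Calabi--Yau threefold has dimension $h^{1,2}$, the two families must coincide. Your approach instead inverts the construction directly, showing that for $X'$ in a Zariski open neighbourhood of $[X]$ one can run the two linkages backwards and recover a linear $\PP^3$. This has the advantage of not requiring an independent computation of $h^{1,2}(X)$, and it makes the geometric content of the lemma more transparent. On the other hand, the paper's parameter count is shorter to execute and yields as a byproduct an independent confirmation of the value $h^{1,2}(X)=55$. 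One small point worth tightening in your write-up: the assertion that the three quadrics $Q_1',Q_2',Q_3'$ remain minimal generators of $\mathcal I_{G'}$ (rather than multiples of $L'$) is indeed an open condition, but you should state explicitly why it holds at the special fibre, namely because the original quadrics defining $Y_{2,2,2}$ are not contained in the hyperplane through which the first linkage is performed.
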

\begin{proof}
We make a parameter count to show that the dimension of the obtained family (of Calabi-Yau threefolds up to isomorphism) is equal to $h^{1,2}(X)=55$, which was computed in \cite{B}. Indeed, the dimension of the family of complete intersections $Y_{2,2,2}$ containing $\Pi$ is
\[\dim\Gr(3,H^0(\mathcal{I}_{\Pi}(2)))-\dim\Aut(\PP^7,\Pi)=69-47=22.\]
Here we compute from the above resolution $\mathcal{F}$ that the space of quadrics containing $\Pi$ is 26-dimensional, and $\Aut(\PP^7,\Pi)$ is the space of automorphisms of $\PP^7$ preserving $\Pi$. Since the bilinkage is of degree 2 (see \cite{Hartshorne-biliaison} for that interpretation of bilinkage), we get an additional $h^0(\oo_Y(\Pi+2H))=1+h^0(Y,2H)=33$ parameters. Thus we have in total $55$ parameters, which implies that a generic element of the deformation family of $X$ is obtained via our construction.
\end{proof}
From the above resolution $\mathcal{X}$  we deduce the following more explicit description of $X$:
\begin{prop}\label{prop!deg-17-ci}
Let $M$ be a $4\times3$ matrix, $v$ a $4\times1$ column vector, each with linear entries in $x_1,\dots,x_8$.
The threefold defined by equations
\[Mv=\bigwedge^3M=0\]
is an aG Calabi--Yau threefold in $\PP^7$ of degree 17 as described in Table 1 no.~4.
\end{prop}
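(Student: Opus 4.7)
The plan is to verify directly that, for generic $(M, v)$, the subscheme $Y \subset \PP^7$ cut out by $v^T M = 0$ and $\bigwedge^3 M = 0$ has the same minimal free resolution $\mathcal{X}$ as the variety $X$ constructed in the preceding subsection by bilinkage, and so lies in family no.~$4$ of Table 1. The numerology already matches: the equations $v^T M$ produce $3$ quadrics, and the maximal minors $\bigwedge^3 M$ produce $4$ cubics, corresponding to $P_1 = 3\oo(-2) \oplus 4\oo(-3)$.

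The core of the proof is a mapping cone computation. The ideal $J = (\bigwedge^3 M)$ of maximal minors of a generic $4 \times 3$ matrix of linear forms is a codimension $2$ Cohen--Macaulay ideal resolved by the Eagon--Northcott complex
\begin{equation*}
0 \to \oo(-4) \to 4\oo(-3) \to J \to 0.
\end{equation*}
For generic $v$, the three quadrics $v^T M$ restrict to a regular sequence on $Z := V(J)$, so $Y$ has codimension $4$ in $\PP^7$. Taking the mapping cone of the Koszul complex on $(v^T M)$ over this Eagon--Northcott resolution, followed by cancellation of non-minimal terms, one recovers $\mathcal{X}$ exactly. This yields that $Y$ is aG; since the resolution ends in $\oo(-8) = \omega_{\PP^7}$ the canonical bundle is trivial, and one reads $\deg Y = 17$ and $h^1(\oo_Y) = 0$ from the Hilbert polynomial. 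Smoothness for generic $(M, v)$ follows by exhibiting one smooth explicit example in Macaulay2 and invoking semicontinuity. Finally, the Betti numbers pin $Y$ to family no.~$4$, distinguishing it from the degree-$17$ families no.~$5$ and no.~$6$, which have different generator shapes and Picard numbers.

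The main obstacle is the cancellation step in the mapping cone, which requires identifying enough Cramer-type syzygies among the generators to remove all redundant terms. The three linear syzygies among the cubics come from $M^T p = 0$, where $p_j = (-1)^{j+1}\det(M_{\hat j})$, via the vanishing of a $4\times 4$ determinant with a repeated column. The three Koszul syzygies $(q_j, -q_i, 0)$ among the quadrics $q_i = v^T M_i$ account for three more. The remaining six arise from identities expressing $v_j p_k - v_k p_j$ in terms of the $q_i$ via $3 \times 3$ cofactors of $M$, obtained from Laplace expansion of the augmented matrix $[v\,|\,M]$. Together these yield the $12$ minimal degree-$4$ syzygies populating the central term of $\mathcal{X}$ and give the entire comparison with the resolution derived by bilinkage.
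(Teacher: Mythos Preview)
Your approach contains a genuine gap. First, the Hilbert--Burch resolution of $J=(\bigwedge^3 M)$ for a $4\times 3$ matrix of linear forms is
\[
0 \to 3\oo(-4) \xrightarrow{\;M\;} 4\oo(-3) \to J \to 0,
\]
not $0\to\oo(-4)\to4\oo(-3)\to J\to 0$; the three linear syzygies you yourself cite from $M^{T}p=0$ are precisely the three columns of $M$. More seriously, the three quadrics $q_i$ are \emph{not} a regular sequence on $Z=V(J)$: since $Z$ has dimension $5$ in $\PP^7$ while $Y$ is a threefold, the $q_i$ cut $Z$ only in codimension $2$. Geometrically, on $Z$ the rank of $M$ drops to at most $2$, so the left kernel of $M$ has dimension at least $2$ and the condition $v\in\ker M^{T}$ imposes only two independent conditions. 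Consequently the tensor of the Koszul complex on $(q_1,q_2,q_3)$ with the Hilbert--Burch resolution is \emph{not} a resolution of $\mathcal{I}_Y$: it has length $5$ and middle terms such as $12\oo(-5)$ and $6\oo(-4)$ that match neither the shape nor the twists of $\mathcal{X}$, and no cancellation of degree-zero maps will produce the central $12\oo(-4)$. The explicit degree-$4$ syzygies you list at the end may be correct, but they do not arise from the mapping cone you describe; to make your strategy work you would have to prove directly that they generate the first syzygy module and that the resolution continues symmetrically to $\oo(-8)$.

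The paper takes a different and shorter route: it runs the bilinkage of the preceding subsection \emph{backwards} from the variety defined by these equations. Performing the two linkages in reverse and passing to minimal resolutions, one lands on the Koszul complex of a complete intersection of four hyperplanes, i.e.\ a $\PP^3$. This exhibits the given threefold as bilinked to $\Pi\cong\PP^3$ inside $Y_{2,2,2}$, hence in family no.~4, without any need to assemble $\mathcal{X}$ from scratch.
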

\begin{proof} We perform the above bilinkage backwards and after passing to the minimal resolution we end up with a Koszul resolution of a complete intersection of four hyperplanes.
\end{proof}

\subsection{Degree $17$ no.~$5$}
We work in the language of toric varieties as described in 
\cite{BZ}, \cite{BCZ}, 
\cite{A}.
Let $\Ce$ be the toric variety $\PP_{\PP^1}(2\oo\oplus3\oo(-1))$ with weight matrix
\[\Ce=
\begin{array}{cccccccc}
& t_1 & t_2 & a_1 & a_2 & b_1 & b_2 & b_3 \\
\hline
h & 1 & 1 & 0 & 0 & -1 & -1 & -1 \\
\eta & 0 & 0 & 1 & 1 & 1 & 1 & 1 
\end{array}
\]
and irrelevant ideal $(t_1,t_2)\cup(a_1,a_2,b_1,b_2,b_3)$.
Then take $\Xhat\subset\Ce$ to be the vanishing locus of the minors of the $2\times3$ matrix with entries
\[\Ce\supset\Xhat\colon\bigwedge^2\begin{pmatrix}q_1&q_2&q_3\\b_1&b_2&b_3\end{pmatrix}=0\]
where $q_i$ are general elements of the linear system $|2\eta|$. 

The toric minimal model program for $\Ce$ runs as follows
\[\begin{array}{lr}
%\xymatrixcolsep{2pc}\xymatrixrowsep{3pc}
{\xygraph{
!{(0,-1.3) }="A"
!{(0,0) }*+{\scriptstyle{a}}="B"
!{(1.7,-1.3) }*+{\scriptstyle{t}}="C"
!{(-1.3,0) }*+{\scriptstyle{b}}="D"
!{(1,-0.5) }*+{\Ce}
!{(-0.3,-0.5) }*+{\Ce'}
"A"-"B"  "A"-"C" "A"-"D"
}} &
{\xymatrix{
&\Ce\ar[ld]\ar[rd]^{\fie}\ar^{\textrm{flip}}@{-->}[rr]
&&{\Ce'}\ar[ld]_{\psi}\ar[rd]\\
\PP^1&&C&&\PP^2
}}
\end{array}\]
The rays of the secondary fan on the left, match up with the diagram of maps on the right. It is a standard computation (see below for an example) to work out all the maps in coordinates on $\Ce$ and its counterpart toric variety $\Ce'=\PP_{\PP^2}(2\oo\oplus2\oo(-1))$, with weight matrix
\[\Ce'=
\begin{array}{cccccccc}
& t_1 & t_2 & a_1 & a_2 & b_1 & b_2 & b_3 \\
\hline
h' & 1 & 1 & 1 & 1 & 0 & 0 & 0 \\
\eta' & -1 & -1 & 0 & 0 & 1 & 1 & 1 
\end{array}\]
and irrelevant ideal $(t_1,t_2,a_1,a_2)\cup(b_1,b_2,b_3)$. Note that the birational transform of $\eta$ (respectively $h$) under $\Ce\dashrightarrow\Ce'$ is $h'$ (resp.~$h'-\eta'$).
%\begin{tabular}{cll}
%Ray $t$ & $\fie_1\colon\Ce\to\PP^1$ & $(t_1,t_2)$ \\
%Chamber $C(t,a)$ & $\Ce$ & embedding of $\Ce$ \\
%Ray $a$ & $\fie_3\colon\Ce\to C\subset\PP^7$ & $(y_it_j,a_1,a_2)$ \\
%Chamber $C(a,y)$ & $\Ce'$ & flip from $\Ce$ \\
%Ray $y$ & $\fie_4\colon\Ce'\to\PP^2$ & fibration
%\end{tabular}\\

Let $\fie\colon\Ce\to C\subset\PP^7$ be the map induced by $|\eta|$, given in coordinates by $x_1=t_1b_1,x_2=t_1b_2,x_3=t_1b_3,x_4=t_2b_1,x_5=t_2b_2,x_6=t_2b_3,x_7=a_1,x_8=a_2$, so that $C$ is a five-dimensional cone over $\PP^1\times\PP^2$ with vertex $\Pi\cong\PP^1$, and $\fie$ is the blowup of $\Pi$ with exceptional locus $\PP^1\times\PP^2$. We find that $X=\fie(\Xhat)$ is defined by the $2\times2$ minors of the $3\times3$ matrix
\[\PP^7\supset X\colon\bigwedge^2\begin{pmatrix}
q_1 & q_2 & q_3 \\ x_1 & x_2 & x_3 \\ x_4 & x_5 & x_6
\end{pmatrix}=0\]
as described in Table 1 (section of weighted Segre embedding of $\PP^2\times\PP^2$).

%The family $\Lambda$ on $X^2$ of K3 surfaces of degree 7 has a fixed part along $\Pi$.
Now $X$ has an elliptic fibration to $\PP^2$ defined by the ratios between the columns of the above matrix. 
Indeed, let $\Xhat'$ in $\Ce'$ denote the birational image of $\Xhat$. Over each fibre of $\Xhat'\to\PP^2$, the three defining quadrics have a single linear dependency induced by the syzygy, and thus the fibre is an elliptic curve. It remains to check directly that $\Xhat'$ and $X$ are isomorphic, because $\Xhat'$ is disjoint from the exceptional locus of $\psi\colon\Ce'\to C$.

The restriction of the flip $\Ce\dasharrow\Ce'$ to $\Xhat$ simply contracts a $\PP^1$-bundle over $\Pi\subset X$, and this resolves the indeterminacy of the rational map $X\dasharrow\PP^1$ to give a K3 fibration on $\Xhat$. We can deduce the following (the proof will be given in \cite{CGKK}):
 \begin{cor}\label{mov(X1)} Calabi--Yau threefold no.~$5$ has Picard number 2, and the K\"{a}hler cone is generated by the divisors $H_1$ and $H_2$, such that $|H_1|$ induces an elliptic fibration and $|H_2|$ contracts a line $\Pi$. After flopping $\Pi$, we obtain another birational model $X'$ admitting a K3 fibration. 
%We show that $\overline{X_{17}}^3$ is isomorphic to $X_{17}^2$
The K\"{a}hler cones of these two threefolds cover the movable cone $Mov(X)$ such that there are no more birational models of $X$ (cf. \cite{Ka}).  
\end{cor}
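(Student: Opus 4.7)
The plan is to use the explicit toric description of $\Xhat$ from the preceding discussion together with Kawamata's cone decomposition for Calabi--Yau threefolds \cite{Ka} to identify all chambers of $\mathrm{Mov}(X)$.

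First I would single out two natural divisor classes on $X$. The elliptic fibration $X\to\PP^2$ constructed above provides the pullback class $H_1=\fie^*\oo_{\PP^2}(1)$, and the small contraction $X\to\bar X$ collapsing $\Pi$ to a point gives a second nef class $H_2$. To establish $\rho(X)=2$ I would pass to $\Xhat\subset\Ce$: the ambient $\Ce$ has Picard rank $2$, generated by $h$ and $\eta$, while $\Xhat$ picks up one further independent class from its intersection with the positive-dimensional fibre locus of the small contraction $\fie\colon\Ce\to C$, namely the exceptional divisor $E$ of $\Xhat\to X$. A Lefschetz/toric argument, or equivalently the value $h^{1,1}(X)=2$ already recorded in \cite{B}, gives $\rho(\Xhat)=3$; since $\Xhat=\mathrm{Bl}_\Pi X$ is a divisorial contraction with exceptional divisor $E$, one concludes $\rho(X)=2$ and $H_1, H_2$ are independent generators of $N^1(X)_\mathbb{R}$.

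With $\rho(X)=2$ fixed, the K\"ahler cone $\mathrm{K}(X)$ is two-dimensional, so it suffices to check that $H_1$ and $H_2$ span its two extremal rays. Both classes are nef by construction and neither defines a finite morphism ($|H_1|$ has positive-dimensional fibres and $|H_2|$ contracts $\Pi$), so $\mathrm{K}(X)=\langle H_1,H_2\rangle$. Next I would verify that $\Pi$ is an Atiyah $(-1,-1)$-curve. The exceptional divisor $E\to\Pi$ of $\Xhat\to X$ is a $\PP^1$-bundle over $\Pi\cong\PP^1$, and the toric construction shows it sits inside the flip locus of $\Ce\dasharrow\Ce'$, so $\Xhat$ admits a second extremal $\PP^1$-contraction of $E$ compatible with the K3 fibration $\Xhat\to\PP^1$ described above; this forces $E\cong\PP^1\times\PP^1$ and hence $N_{\Pi/X}\cong\oo(-1)\oplus\oo(-1)$. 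The standard Atiyah flop then produces a smooth Calabi--Yau threefold $X'$, exhibits $\Xhat$ as the common resolution of $X$ and $X'$, and shows that contracting the opposite ruling of $E$ on $\Xhat$ factors the K3 fibration through an honest morphism $X'\to\PP^1$.

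Finally, to identify $\mathrm{Mov}(X)$ I would apply Kawamata's theorem \cite{Ka}: the movable cone of a Calabi--Yau threefold is the union of the K\"ahler cones of all its smooth minimal models, glued along flop walls. The ray $\mathbb{R}_{\geq 0}H_2\subset\mathrm{K}(X)$ is precisely the flop wall of $\Pi$, identified with the corresponding wall of $\mathrm{K}(X')$; the two outer rays $H_1$ on $X$ and the K3-fibration class on $X'$ correspond to fibre-type contractions, so they bound $\mathrm{Mov}(X)$ rather than initiate further flops. Hence $\mathrm{Mov}(X)=\mathrm{K}(X)\cup\mathrm{K}(X')$ and no further birational model exists. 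The step I anticipate as the main obstacle is the rigorous verification that $\rho(\Xhat)=3$ directly from the toric embedding, since $\Xhat\subset\Ce$ is non-transverse to the flip locus of $\Ce\dasharrow\Ce'$ and the classical Lefschetz theorem does not apply; the cleanest way around this is probably to compute $h^{1,1}(\Xhat)$ by toric intersection theory or to read it off from the blow-up formula $h^{1,1}(\Xhat)=h^{1,1}(X)+1$ once $h^{1,1}(X)=2$ is taken from \cite{B}.
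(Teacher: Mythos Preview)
The paper does not actually prove this corollary; it announces the result with the parenthetical ``(the proof will be given in \cite{CGKK})''. So there is nothing to compare your argument against, and your outline is essentially the proof one expects \cite{CGKK} to contain: take $h^{1,1}(X)=2$ from \cite{B}, identify the two boundary rays of the nef cone as the elliptic fibration and the small contraction of $\Pi$, flop to $X'$ carrying the K3 fibration, and invoke \cite{Ka} to close off the movable cone since both outer walls are of fibre type.

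A few points to tighten. First, your notation slips: you write $H_1=\fie^*\oo_{\PP^2}(1)$, but in the paper $\fie$ is the map $\Ce\to C\subset\PP^7$; the elliptic fibration is the projection $\Ce'\to\PP^2$ restricted to $\Xhat'\cong X$. Second, the sentence ``$\Xhat$ picks up one further independent class \dots\ namely the exceptional divisor $E$'' is not an argument for $\rho(\Xhat)=3$: the flip locus $\Exc(\fie)$ is a surface in the fivefold $\Ce$, so it is not a divisor on $\Ce$ that could restrict. You acknowledge this and fall back on $\rho(\Xhat)=\rho(X)+1$ with $\rho(X)=2$ from \cite{B}; that is the honest route, so drop the detour. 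Third, ``$E\cong\PP^1\times\PP^1$ hence $N_{\Pi/X}\cong\oo(-1)\oplus\oo(-1)$'' needs one more line: $E\cong\PP^1\times\PP^1$ only gives $N_{\Pi/X}\cong\oo(a)^{\oplus2}$, and then $K_X\cdot\Pi=0$ forces $\deg N_{\Pi/X}=-2$, so $a=-1$.

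The one genuine gap is that you never verify that $\Pi$ is the \emph{entire} exceptional locus of the $H_2$-contraction, i.e.\ that flopping $\Pi$ alone already produces a model on which the K3 pencil is a morphism. The cleanest way to see this with the paper's setup is to observe that $\Exc(\fie)=\{b_1=b_2=b_3=0\}\cong\PP^1_t\times\PP^1_a$ lies entirely inside $\Xhat$ (the second row of the defining matrix vanishes there), so $\fie|_{\Xhat}\colon\Xhat\to X$ is the blow-up of $\Pi$ with exceptional divisor exactly $E=\PP^1_t\times\PP^1_a$; the K3 pencil $h$ on $\Xhat$ is constant on the $\PP^1_a$-rulings of $E$, hence descends to the flop $X'$ obtained by contracting those rulings, with no further curves to flop.
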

\begin{rem} This example is discussed from the point of view of mirror symmetry in \cite{JKLM}.
\end{rem}
%It is clear from examining the equations that the considered Calabi-Yau satisfies the claimed properties.

\subsection{Degree $17$ no.~6}
We start from the toric 6-fold $\FF$ with weight matrix
\[\FF=
\begin{array}{ccccccccc}
& t_1 & t_2 & a_1 & a_2 & b_1 & b_2 & b_3 & c \\
\hline
h & 1 & 1 & 0 & 0 & -1 & -1 & -1 & -1 \\
\eta & 0 & 0 & 1 & 1 & 1 & 1 & 1 & 2
\end{array}
\]
and irrelevant ideal $(t_1,t_2)\cap(a_1,a_2,b_1,b_2,b_3,c)$.
Here $\FF$ is a $\PP^5(1^5,2)$-bundle over $\PP^1$, with a section of $\ZZ/2$-quotient singularities. The Chow group of $\FF$ is generated by $h=(1,0)$ and $\eta=(0,1)$, with intersection numbers $h^2=0$, $h\cdot\eta^5=1/2$, $\eta^6=7/4$.

As with no.~5, the toric minimal model program on $\FF$ can be worked out using the secondary fan:
\[\begin{array}{lr}
\xygraph{
%{<0cm,0cm>;<1cm,0cm>:<0cm,1cm>::}
!{(0,-1.3) }="A"
!{(0,0) }*+{\scriptstyle{a}}="B"
!{(1.7,-1.3) }*+{\scriptstyle{t}}="C"
!{(-2.6,0) }*+{\scriptstyle{b}}="D"
!{(-1.2,0) }*+{\scriptstyle{c}}="E"
!{(0.8,-0.5) }*+{\FF}
!{(-0.4,-0.3) }*+{\FF'}
!{(-1.4,-0.3) }*+{\FF''}
"A"-"B"  "A"-"C" "A"-"D" "A"-"E"
} 
 &
{\xymatrix{
&\FF\ar[ld]\ar^{\varphi}[rd]\ar^{\textrm{antiflip}}@{-->}[rr]&&{\FF'}\ar[ld]_{\psi}\ar[rd]^{\fie'}\ar^{\textrm{flip}}@{-->}[rr]&&{\FF''}\ar[ld]_{\psi'}\ar[rd]\\
\PP^1&&F&&F'&&\PP^2
}}
\end{array}\]
%We summarize the models of $\FF$ and birational maps between them:
%
%\begin{tabular}{lcl}
%Ray $t$ & $\Ce\to\PP^1$ & fibration \\
%Chamber $C(a,t)$ & $\Ce$ & embedding of $\Ce$ with fibres as linear subspaces  \\
%Ray $a$ & scroll $C$ & contracts $\PP^1\times\PP^1\subset\Ce$ to line $\Pi$ in $C\subset\PP^7$ \\
%Chamber $C(a,c)$ & $\Ce'$ & embedding of $\Ce'$ (some blow up of $\Pi \subset C$) \\
%Ray $c$ & scroll $C'$ & contracts $F'\subset\Ce'$ to point $c$ in $C'$ \\
%Chamber $C(c,b)$ & $\Ce''$ & $\PP^4$-bundle over $\PP^2$ \\
%Ray $b$ & $\Ce''\to\PP^2$ & fibration
%\end{tabular}\\

Let $\varphi\colon\FF\to\PP(1^8,2^2)$ be the map induced by taking $\Proj\bigoplus_{m\ge0}H^0(\FF,m\eta)$. The image $F=\fie(\FF)$ is defined by
\[
\bigwedge^2\left(\begin{array}{ccc|c}
x_1 & x_2 & x_3 & y_1  \\
x_4 & x_5 & x_6 & y_2
\end{array}\right)=0
\]
where $x_1=t_1b_1,\dots,x_6=t_2b_3$, $x_7=a_1$, $x_8=a_2$ have weight 1 and $y_1=t_1c$, $y_2=t_2c$ have weight 2. The exceptional locus of $\varphi$ is $\Exc(\fie)\cong\PP^1\times\PP^1$ defined by $b_1=b_2=b_3=c=0$, and this is contracted to the line $\Pi=\PP^1$ with coordinates $x_7,x_8$. The varieties $\Ce$ and $C$ from no.~5 also appear here as $\Ce=\FF\cap(c=0)$ and $C=F\cap(y_1=y_2=0)$. 

Let $\Xhat$ be a codimension 3 complete intersection in $\FF$ of two general elements $f_1,f_2$ in $|2\eta|$, and one general element $g$ in $|{-2h}+3\eta|$. Then $\Xhat$ has degree
\[(-2h+3\eta)\cdot(2\eta)\cdot(2\eta)\cdot \eta^3=-8\cdot\frac12+12\cdot\frac74=17,
\]
and since $K_{\FF}=2h-7\eta$, we see from adjunction that $K_{\Xhat}$ is trivial.
For $i=1,2$, general choices $f_i=t_ic+\dots$ mean that $\Xhat$ is disjoint from the $\ZZ/2$-section of $\FF$. Moreover, in the image of $\fie$, the $f_i$ eliminate the weight 2 variables $y_i$ in favour of quadrics $p_i(x_1,\dots,x_8)$, so that $\fie(\Xhat)$ lies in $\PP^7$.
%under the map $\varphi$, the variables $y_1$ and $y_2$ are eliminated in favour of $p_1$ and $p_2$ so that $X$ is in $\PP^7$.

For dimension reasons, $\Xhat$ is disjoint from $\Exc(\fie)$ and thus $X=\varphi(\Xhat)$ is a smooth Calabi--Yau 3-fold of degree 17 in $\PP^7$ with equations
\begin{equation}\label{D-deg-6}\bigwedge^2\left(\begin{array}{ccc|c}
x_1 & x_2 & x_3 & p_1  \\
x_4 & x_5 & x_6 & p_2
\end{array}\right)=0
\end{equation}
together with
\[
\begin{array}{rclcl}
m_1x_1 + m_2x_2 + m_3x_3 + kp_1 & = & 0 \\
m_1x_4 + m_2x_5 + m_3x_6 + kp_2 & \equiv & n_1x_1 + n_2x_2 + n_3x_3 + lp_1 & = & 0 \\
&& n_1x_4 + n_2x_5 + n_3x_6 + lp_2 & = & 0.
\end{array}\]
As mentioned above, the $p_i$ are general quadrics in $x_1,\dots,x_8$. On the other hand, $m_i,n_i$ are quadrics and $l,k$ are linear forms satisfying the equivalence implied by the second displayed equation, modulo the $2\times2$ minors of the matrix (this is called ``rolling factors''). Thus the minors comprise three quadrics and three cubics, and the rolling factors equations give a further three cubics defining $X$.
%the three rolling factors cubics cut out the divisor in $|{-2h}+3\eta|$.
% so $X$ is a smooth Calabi--Yau threefold. 
We can check using computer algebra that a random example is aG, so a generic $X$ is aG.
%Nevertheless, $X$ in $\PP^7$ does contain $\Pi$. The fibration $\Lambda$ is defined by the ratios $x_1/x_4=x_2/x_5=x_3/x_6=p_1/p_2$ and since $p_i$ are general quadrics, this means that $\Lambda$ is base point free on $X$.
\begin{prop}\label{prop!toric-hodge} The Hodge numbers of Calabi--Yau no.~$6$ are $h^{1,1}=2$ and $h^{1,2}=54$.
\end{prop}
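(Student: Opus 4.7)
The plan is to first identify $X$ with $\Xhat$ (already noted in the text: the dimension count shows $\Xhat$ is disjoint from $\Exc(\varphi)$), and then compute $h^{1,1}$ and $h^{1,2}$ separately on $\Xhat$ inside the toric ambient $\FF$. The $\ZZ/2$-section $\Sigma\subset\FF$ consists of the locus $a_1{=}a_2{=}b_1{=}b_2{=}b_3{=}c{=}0$, which is a $\PP^1$ of codimension $5$; by the genericity assumption on $f_i=t_ic+\dots$ and $g$, $\Xhat$ avoids $\Sigma$, so $\Xhat$ sits in the smooth part of $\FF$.

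For $h^{1,1}(X)=2$, the idea is to apply a Lefschetz-type theorem for (quasi-smooth) complete intersections in a simplicial toric variety (cf.\ Mavlyutov, Danilov--Khovanskii) to conclude that the restriction $\Pic(\FF)\to\Pic(\Xhat)$ is an isomorphism. Since $\Pic(\FF)=\ZZ\langle h,\eta\rangle$ has rank $2$, this gives $h^{1,1}(\Xhat)=2$. For this step one must verify that the three divisor classes $2\eta,2\eta,-2h+3\eta$ cutting out $\Xhat$ are sufficiently positive; the class $2\eta$ is big and nef (it realises $\varphi$), and after moving $-2h+3\eta$ into the ample cone by combining with $\eta$ one can reduce to the standard case.

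For $h^{1,2}(X)=54$, the plan is to compute the topological Euler characteristic via Chern-class arithmetic on $\FF$ and then use the Calabi--Yau identity $e(X)=2(h^{1,1}-h^{1,2})$. From the toric description
\[
c(T_{\FF})=(1+h)^2(1+\eta)^2(1-h+\eta)^3(1-h+2\eta),
\]
together with the normal bundle
\[
c(N_{\Xhat/\FF})=(1+2\eta)^2(1-2h+3\eta),
\]
one forms $c(T_{\Xhat})=c(T_{\FF})/c(N_{\Xhat/\FF})$ and reads off $c_3(T_{\Xhat})$. Pushing forward to $\FF$ via the class $(2\eta)^2(-2h+3\eta)$ of $\Xhat$, and using the intersection numbers $h^2=0$, $h\cdot\eta^5=1/2$, $\eta^6=7/4$, a direct calculation yields $e(X)=-104$, hence $h^{1,2}(X)=h^{1,1}(X)+52=54$.

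The main obstacle is the Lefschetz step: making sure that the restriction map on $H^2$ is an isomorphism despite the singular section $\Sigma$ and the non-amplitude of $-2h+3\eta$. One way around it is to deform to an explicit member and verify the Picard number by exhibiting two independent divisor classes (e.g.\ the pullbacks under the flip $\FF\dashrightarrow\FF''$ of the generators of $\Pic(\FF'')$), matched with the upper bound from the Euler characteristic calculation above. The Chern-class computation itself is routine once $h^2=0$ is used systematically to simplify the arithmetic.
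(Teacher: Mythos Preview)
Your proposal is correct and follows essentially the same approach as the paper: invoke a Lefschetz-type statement on the toric ambient $\FF$ to get $h^{1,1}=2$, then compute $e(X)$ from $c(T_{\FF})/c(N_{\Xhat/\FF})$ using the given intersection numbers to obtain $-104$ and hence $h^{1,2}=54$. The paper simply writes ``$h^{1,1}=2$ by the Lefschetz hyperplane theorem'' without further comment, whereas you are more scrupulous about the positivity hypotheses and the singular $\ZZ/2$-section; the Chern-class computation is identical in both.
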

\begin{proof}
Let $i\colon X\to \FF$ be the inclusion. There is a short exact sequence
\[0\to T_X\to i^*T_{\FF}\to N_{X/\FF}\to 0\]
so that $c_t(T_X)\cdot c_t(N_{X/\FF})=c_t(i^*T_{\FF})=i^*c_t(T_{\FF})$. Now standard toric methods compute $c_t(T_{\FF})$ to be
\[c_t(T_{\FF})=(1+ht)^2(1+\eta t)^2(1+(-h+\eta)t)^3(1+(-h+2\eta)t),\]
and since $X$ is a complete intersection in $\FF$,
\[c_t(N_{X/\FF})=(1+2\eta t)^2(1+(-2h+3\eta)t).\]
Thus
\[c_t(T_X)=i^*c_t(T_{\FF})(\sum_{k=0}^\infty (-2\eta t)^k)^2\sum_{k=0}^\infty (-(-2h+3\eta)t)^k).\]
After restriction to $X$, the coefficient of $t^3$ is $212h\eta^5 - 120\eta^6 = -104$. Since $h^{11}=2$ by the Lefschetz hyperplane theorem, we have $h^{12}=54$.
\end{proof}

\subsubsection{Other birational models of no.~$6$}
The birational transform of $\Xhat$ can be traced around the toric MMP diagram. 
Here are two more models of $\Xhat$:
\[\Xhat'\colon|2h'|\cap|2h'|\cap|h'+2\eta'|
\subset\FF'
=\begin{array}{ccccccccc}
& t_1 & t_2 & a_1 & a_2 & b_1 & b_2 & b_3 & c \\
\hline
h' & 1 & 1 & 1 & 1 & 0 & 0 & 0 & 1 \\
\eta' & -1 & -1 & 0 & 0 & 1 & 1 & 1 & 1
\end{array}
\]
where the irrelevant ideal of $\FF'$ is $(t_1,t_2,a_1,a_2)\cap(b_1,b_2,b_3,c)$. 
%Here $\Xhat'$ is fibred in elliptic curves over a quadric in $\PP^3$.
\[\Xhat''\colon|{2h''}-2\eta''|\cap|2h''-2\eta''|\cap|h''+\eta''|
\subset\FF''
=\begin{array}{ccccccccc}
& t_1 & t_2 & a_1 & a_2 & c & b_1 & b_2 & b_3 \\
\hline
h'' & 1 & 1 & 1 & 1 & 1 & 0 & 0 & 0 \\
\eta'' & -2 & -2 & -1 & -1 & 0 & 1 & 1 & 1
\end{array}
\]
where the irrelevant ideal of $\FF''$ is $(b_1,b_2,b_3)\cap(t_1,t_2,a_1,a_2,c)$, so that $\FF''=\PP_{\PP^2}(\oo\oplus2\oo(-1)\oplus2\oo(-2))$, note we moved variable $c$. %In this model, $\Xhat''$ has a elliptic fibration over $\PP^2$.

First observe that $\FF\to\PP^1$ induces a fibration on $\Xhat\to\PP^1$ in K3 surfaces of degree six realised as complete intersections $(2,2,3)$ in $\PP^5(1^5,2)$, and $\FF''\to\PP^2$ induces a fibration on $\Xhat''\to\PP^2$ in elliptic curves of degree four as $(1,2,2)$ in $\PP^4$.
Now, the antiflip $\FF\dasharrow\FF'$ contracts $\Exc(\fie)$ and extracts $\Exc(\psi)\cong\PP^1\times\PP^2$ defined by $(t_i=0)$ in $\FF'$. We have shown above that $\fie$ induces an isomorphism $\Xhat\cong X$, and a computation in coordinates shows that $\Exc(\psi)$ is disjoint from $\Xhat'$, so that $\Xhat\cong X\cong\Xhat'$. The flip $\FF'\dasharrow\FF''$ contracts $\PP^3\colon(b_i=0)$ to the point $P_c$, and extracts $\PP^2\colon(a_i=t_j=0)$. The restriction of $\FF'\dasharrow\FF''$ to $\Xhat'$ flops the base locus of $|\eta-h|$, which is a curve. Thus $X$ has a fibration in K3 surfaces of degree 6, and after the flop, $X''$ has an elliptic fibration.

%It is not difficult to show that the restrictions of linear systems from ray $t$ round to ray $c$ are all basepoint free on $X$, so this is the K\"ahler cone of $X$. Moreover, crossing ray $c$ flops $\Pi$ on $X$ to $X''$ with an elliptic fibration as described above.
We have proved the following:
 \begin{cor}\label{mov(X)} Calabi--Yau threefold $X$ no.~$6$ has Picard number 2,  the K\"{a}hler cone is generated by the divisors $H_1=h|_X$ and $H_2=2\eta-h|_X$, such that $|H_1|$ induces a K3 fibration and $|H_2|$ a contraction of a line $l$. After flopping $l$, the linear system $|H_2|$ becomes an elliptic fibration induced by $\eta-h$. The K\"{a}hler cones of these two threefolds covers the movable cone $Mov(X)$ such that there are no more birational models of $X$
 (cf.~\cite{Ka}).  
\end{cor}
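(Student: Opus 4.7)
The plan is to combine the Hodge number computation from Proposition~\ref{prop!toric-hodge} with a systematic analysis of the toric MMP for $\FF$ outlined above, restricted to the complete intersection $\Xhat \cong X$. Since $X$ is a Calabi--Yau threefold, $h^{2,0}(X) = h^1(\oo_X) = 0$, so the Hodge decomposition combined with Proposition~\ref{prop!toric-hodge} yields $\rho(X) = h^{1,1}(X) = 2$. Hence the K\"ahler cone of $X$ is a strictly convex two-dimensional cone, and it suffices to identify its two extremal rays and the contractions they define.

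For the first ray, the linear system $|h|$ on $\FF$ induces the projection $\FF \to \PP^1$, which restricts to a morphism $X \to \PP^1$. Its general fibre is the intersection of a fibre $\PP^5(1^5,2) \subset \FF$ with the three defining equations of $\Xhat$, i.e.\ a $(2,2,3)$ complete intersection in $\PP^5(1^5,2)$, which by adjunction is a K3 surface of degree $6$ (smoothness for generic $\Xhat$ follows from a Bertini-type argument). Hence $H_1 = h|_X$ is semiample but not ample, and generates one extremal ray.

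For the second ray, I would trace $\Xhat$ through the toric MMP of $\FF$ and check at each wall whether the restriction to $\Xhat$ is an isomorphism or a non-trivial birational modification. The antiflip $\FF \dashrightarrow \FF'$ has exceptional loci $\Exc(\fie)=\{b_1=b_2=b_3=c=0\}$ in $\FF$ and $\Exc(\psi)=\{t_1=t_2=0\}$ in $\FF'$; a dimension count using the bidegrees of the three defining equations shows both are disjoint from $\Xhat$ and $\Xhat'$ for generic choice, so $\Xhat \cong \Xhat'$ via the antiflip. The next flip $\FF' \dashrightarrow \FF''$ has exceptional loci $\{b_1=b_2=b_3=0\}$ in $\FF'$ (meeting $\Xhat'$ in a rational curve $l$) and $\{a_1=a_2=t_1=t_2=0\}$ in $\FF''$ (disjoint from $\Xhat''$ by dimension). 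Since $2\eta-h$ on $\FF$ is identified with the class $h''$ on $\FF''$, which pulls back from the common contraction target $F'$, the class $H_2=(2\eta-h)|_X$ is nef on both $X$ and $X''$, contracts $l$ on $X$, and is extremal on both sides. Thus $H_2$ is the second extremal ray of the K\"ahler cone, and $X \dashrightarrow X''$ is the flop of $l$.

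On $X''$ the same toric analysis identifies the other extremal ray as $H_3=(\eta-h)|_{X''}$, which corresponds to $\eta''$ on $\FF''$; the morphism $\FF'' \to \PP^2$ induced by $|\eta''|$ restricts to $X''$ as a morphism whose generic fibre is a $(1,2,2)$ complete intersection in $\PP^4$, i.e.\ an elliptic curve of degree $4$. To conclude that $\mathrm{Mov}(X)$ is covered by the K\"ahler cones of $X$ and $X''$ with no further birational models, I would invoke Kawamata's theorem \cite{Ka} that $\mathrm{Mov}(X)$ is a union of K\"ahler cones of minimal models separated by flopping walls, and note that the outer walls $H_1$ and $H_3$ are fibre-type contractions which terminate the chain. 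The main obstacle is verifying that $\Xhat' \cap \{b_1=b_2=b_3=0\}$ is a single smooth rational curve: this requires a careful inspection of the three defining equations restricted to the exceptional locus, using the residual torus action to identify the cut-out subvariety as a line.
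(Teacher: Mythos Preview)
Your approach is the same as the paper's: run the toric MMP for $\FF$, restrict each wall-crossing to $\Xhat$, and read off the two extremal rays and the single flop. Your identification of $H_2=2\eta-h$ with the class $h'+\eta'=h''$ on the $c$-wall is correct, and the invocation of \cite{Ka} at the end is exactly what the paper does.

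There is one genuine slip. You assert that the extracted locus $\{a_1=a_2=t_1=t_2=0\}\cong\PP^2$ in $\FF''$ is disjoint from $\Xhat''$ ``by dimension''. This is false: every monomial of bidegree $(2h''-2\eta'')$ involves some $a_i$ or $t_j$, so both $f_1,f_2$ vanish identically on this $\PP^2$, and only $g\in|h''+\eta''|$ cuts it, restricting (with $c=1$) to a single linear form in $b_1,b_2,b_3$. Hence $\Xhat''\cap\PP^2\cong\PP^1$, not $\emptyset$. The naive dimension count fails because the intersection is far from transverse. This $\PP^1$ is exactly the flopped curve on the $X''$ side; without it your own argument would be inconsistent, since if $\Xhat''$ missed the extracted locus then $\psi'|_{\Xhat''}$ would be an isomorphism onto the \emph{singular} image $\fie'(\Xhat')$, contradicting smoothness of $\Xhat''$ and leaving no room for a flop. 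Once you correct this, the rest of your outline goes through, and the ``main obstacle'' you flag (that $\Xhat'\cap\{b_i=0\}$ is a single smooth $\PP^1$) is handled by the same kind of restriction computation: on $\{b_i=0,c=1\}$ the two equations $f_i$ become $t_i+q_i(a_1,a_2)=0$, eliminating $t_1,t_2$ and leaving a $\PP^1$ in $(a_1:a_2)$.
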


\subsection{Degree 18}\label{sec!degree-18}
%If we assume that the ideal of $X\subset \PP^7$ is generated by cubics and contained in a complete intersection of two quadrics then
%the degree is $18$.
%Let $X\subset \PP^7$ be bilinked through a cubic then a quartic inside a complete intersection $2,2,3$ to a Del Pezzo threefold of degree $6$ in $\PP^7$. 
We know two threefolds of degree $6$ in $\PP^7$: $F_2=\PP^1\times\PP^1\times \PP^1$ and a hyperplane section $F_1$ of $\PP^2\times \PP^2$. 

\begin{prop}\label{18} For $i=1,2$, the del Pezzo threefold $F_i$ is bilinked on a complete intersection $Y_{2,2,3}$
to an aG Calabi--Yau threefold $X_i$ of degree $18$ in $\PP^7$.
\end{prop}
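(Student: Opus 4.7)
The plan is to adapt the bilinkage construction of section~\ref{deg17no4}, replacing the ambient complete intersection $Y_{2,2,2}$ by $Y_{2,2,3}$ and the starting variety $\Pi = \PP^3$ by the del Pezzo threefold $F_i$. I first need to exhibit $F_i$ inside a complete intersection $Y = Y_{2,2,3} \subset \PP^7$: for $F_2 = \PP^1 \times \PP^1 \times \PP^1$ the homogeneous ideal is generated by nine Segre quadrics, and for $F_1 = (1,1) \subset \PP^2 \times \PP^2$ by nine quadrics in $\PP^7$, so in each case a generic choice of two quadrics and one cubic vanishing on $F_i$ yields a regular sequence. This produces an arithmetically Gorenstein fourfold $Y$ of degree $12$ with $\omega_Y = \oo_Y(-1)$.

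Inside $Y$ the bilinkage is now straightforward. Since $F_i$ is a Cohen--Macaulay divisor of degree $6$ on $Y$, a general hyperplane section $Y_{2,2,3,1}$ links $F_i$ directly to a residual threefold $G_i \subset Y$ of degree $12 - 6 = 6$; next, a general quadric containing $G_i$ produces a section $Y_{2,2,3,2}$ of degree $24$, which links $G_i$ to a residual threefold $X_i \subset Y$ of degree $24 - 6 = 18$. Since Peskine--Szpiro direct linkage preserves the arithmetically Gorenstein property and $F_i$ is aG in $\PP^7$, so is $X_i$.

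To identify the canonical sheaf of $X_i$ and to confirm the Calabi--Yau condition, I would compute the minimal free resolution of $\mathcal{I}_{X_i}$ by iterating the mapping cone construction from section~\ref{deg17no4}. Starting from the known Eagon--Northcott type resolution of $\mathcal{I}_{F_i}$ and the Koszul complex of $Y_{2,2,3,1}$, one builds a resolution of $\mathcal{I}_{G_i}$; pairing this with the Koszul complex of $Y_{2,2,3,2}$ yields a resolution of $\mathcal{I}_{X_i}$. After trimming redundant summands, the resulting minimal symmetric resolution should terminate in $\oo(-8)$, so that $\omega_{X_i} \cong \mathrm{Ext}^4(\oo_{X_i},\oo_{\PP^7})(-8) \cong \oo_{X_i}$, and $h^1(\oo_{X_i}) = 0$ follows from arithmetic Cohen--Macaulayness of $X_i$.

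The main obstacle is to verify smoothness of $X_i$. I expect to do this by an explicit Magma or Macaulay2 computation on a single random example for each $i$ and then conclude for the generic member by upper semicontinuity. A subsidiary point is to confirm that the intermediate residual $G_i$ is reduced, equidimensional and Cohen--Macaulay so that the second linkage is well defined; this should follow from a general position argument for the hyperplane through $F_i$ but merits an explicit check.
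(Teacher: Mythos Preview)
Your overall strategy matches the paper's---bilinkage inside $Y_{2,2,3}$ followed by the mapping cone computation and a computer check of smoothness---but the specific linkage you propose does not work. You link $F_i$ first by a hyperplane and then by a quadric, whereas the paper links by a quadric and then by a cubic (i.e.~through $S_{2,2,2,3}$ and then $T_{2,2,3,3}$). Both are height~$1$ bilinkages, so the numerics agree, but the hyperplane step is impossible: both $F_1$ and $F_2$ are linearly normal and span $\PP^7$ (the Segre embedding of $\PP^1\times\PP^1\times\PP^1$ is nondegenerate in $\PP^7$, and $F_1$ is a hyperplane section of $\PP^2\times\PP^2\subset\PP^8$), so $H^0(\mathcal{I}_{F_i}(1))=0$ and there is no complete intersection $Y_{2,2,3,1}$ containing $F_i$. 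This kills the first linkage outright.

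The fix is exactly what the paper does: replace the pair $(1,2)$ by $(2,3)$. The ideal of $F_i$ is generated by nine quadrics, so one can choose three of them to cut out $S_{2,2,2,3}\supset F_i$ and obtain a residual $G_i$ of degree $18$; the delicate point (which you would also have to address) is that $G_i$ lies on a cubic not containing $F_i$, so that the second link through $T_{2,2,3,3}$ exists. The paper handles this as in \cite[Lem.~3.1]{K2}. After that, your mapping cone argument and computer smoothness check go through as you describe, and the minimal resolution indeed ends in $\oo(-8)$.
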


\begin{proof}
Fix $i=1$ or $2$. Take three quadrics and one cubic in $\PP^7$, each of which contains the del Pezzo threefold $F_i$. We obtain a threefold $G_i$ which is linked on the complete intersection $S_{2,2,2,3}$ with $F_i$. Now denote by $Y_{2,2,3}$ a generic complete intersection containing $S_{2,2,2,3}$. Since $F_i$ and $S_{2,2,2,3}$ are arithmetically Gorenstein, we infer that $G_i$ is arithmetically Cohen--Macaulay.
We prove as in \cite[Lem.~3.1]{K2}) that $G_i$ is contained in a cubic that does not contain $F_i$. Now let $X_i$ be the threefold linked to $G_i$ through $T_{2,2,3,3}$, the intersection of the cubic with $Y_{2,2,3}$.
%Moreover the degree of $F_i$ is equal to 6, and clearly $F_i\subset S_{2,2,3}$ by construction, hence we can choose a quadric $Q$ from $\PP^7$ which contains $F_i$ and gives an complete intersection $S_{2,2,2,3}$. Repeat the above process and get a Cohen--Macaulay threefold $X_i$ of degree 18.

%Both threefolds $X_1$ and $X_2$ are linked on $S_{2,2,2,3}$ with $F_1$ and $F_2$ respectively, so far in consequence these are bilinked with $F_2$ and $F_1$ on the complete intersection $S_{1,2,2,3}$.

We show that $X_i$ is a Calabi--Yau threefold and compute its minimal resolution, following the argument presented in section \ref{deg17no4} based on the mapping cone construction.
%The ideal defining the complete intersection $S_{2,2,2,3}$ has the following Koszul resolution
%\begin{align*}
%\mathcal{S}\colon0\rightarrow\oo(-9)\rightarrow 3\oo(-7)\oplus\oo(-6)\rightarrow & 3\oo(-5)\oplus 3\oo(-4)\rightarrow \\
%& \oo(-3)\oplus 3\oo(-2)\rightarrow\mathcal{I}_{S_{2,2,2,3}}\rightarrow 0
%\end{align*}
The minimal free resolution of the ideal defining $F_i$ is
\[\mathcal{F}\colon0\rightarrow\oo(-6)\rightarrow 9\oo(-4)\rightarrow 16\oo(-3)\rightarrow 9\oo(-2)\rightarrow \mathcal{I}_{F_i}\rightarrow 0\]
for both $i=1$ and $2$. Applying the mapping cone construction and \cite[Exercise 21.23]{E}, we get the following resolution for $\mathcal{I}_{G_i}$:
\[
\mathcal{G}\colon0\rightarrow6\oo(-7)\rightarrow17\oo(-6)\rightarrow 12\oo(-5)\oplus3\oo(-4)\rightarrow 2\oo(-3)\oplus3\oo(-2)\rightarrow \mathcal{I}_{G_i}\rightarrow 0.
\]

%%%%%%%%%%%%%%%%%%%%%%%%%
% minimal resolution details

%%%%%%%%%%%%%%%%%%%%%%%%%
%Examining the resolution of $S_{2,2,2,3}$, we get the Castelnuovo--Mumford regularity $r(R/I_{S_{2,2,2,3}})=9-4=5$. 
%The inclusion of ideals $I_S\subset I_F$ induces a map $\alpha\colon\mathcal{S}\to\mathcal{F}$, and a slightly modified version of the mapping cone of $\alpha^{\vee}(-9)$ gives a resolution of the ideal defining the linked variety $G_i$ (cf.~\cite[Prop.~1.10]{No}):
%\begin{align*}
%\mathcal{G}\colon0\rightarrow9\oo(-7)\rightarrow17\oo(-6)\oplus3\oo(-7)\rightarrow & 12\oo(-5)\oplus3\oo(-4)\rightarrow \\
%& 2\oo(-3)\oplus3\oo(-2)\rightarrow \mathcal{I}_{G_i}\rightarrow 0
%\end{align*}
%Now, we can take the cubic in $\mathbb{P}^7$ which contains $G_i$ and repeat the above linkage. Note that for the complete intersection $T_{2,2,3,3}$, the Koszul complex is
%\begin{align*}
%\mathcal{T}\colon0\rightarrow\oo(-10)\rightarrow 2\oo(-7)\oplus2\oo(-8)\rightarrow & \oo(-4)\oplus4\oo(-5)\oplus\oo(-6)\rightarrow \\
%& 2\oo(-2)\oplus 2\oo(-3)\rightarrow \mathcal{I}_{T_{2,2,3,3}}\rightarrow 0.
%\end{align*}
%The Castelnuovo--Mumford regularity is $r(R/I_{T_{2,2,3,3}})=10-4=6$. 
Then the mapping cone construction for the second link gives the free resolution:
\begin{align*}
\mathcal{X'}\colon0\rightarrow3\oo(-8)\oplus2\oo(-7)&\rightarrow2\oo(-8)\oplus2\oo(-7)\oplus3\oo(-6)\oplus12\oo(-5)\rightarrow \\
& \oo(-6)\oplus4\oo(-5)\oplus18\oo(-4)\rightarrow8\oo(-3)\oplus2\oo(-2)\rightarrow\mathcal{I}_{X_i}\rightarrow 0.
\end{align*}
%Since the two quadrics and two cubics defining $T$ form part of a minimal set of generators for the ideal of $G_i$, we can replace the last term $3\oo(-8)\oplus2\oo(-7)$ in $\mathcal{X}$ by the cokernel of the inclusion 
%$[2\oo(-2)\oplus2\oo(-3)\to\mathcal{G}_1](-10)$, which is simply $\oo(-8)$ (see \cite[Exercise 21.23]{E}).
Again, using \cite[Exercise 21.23]{E} on $\mathcal{X'}$ and then applying Gorenstein symmetry, we get the following minimal resolution of $\mathcal{I}_X$:
\[0\rightarrow\mathcal{O}(-8)\rightarrow 2\mathcal{O}(-6)\oplus 8\mathcal{O}(-5)\rightarrow 18\mathcal{O}(-4)\rightarrow 8\mathcal{O}(-3)\oplus 2\mathcal{O}(-2)
 \rightarrow\mathcal{I}_{X_i}\rightarrow 0,\]
and we conclude as before, that $K_{X_i}\cong\oo_{X_i}$.
%Finally, from the resolution we can easily compute that $h^{1,0}(X_i)=h^{2,0}(X_i)=0$.

In order to prove the smoothness of $X_1$ and $X_2$, we work out a particular example using the computer, and then we find sufficiently many $4\times4$ minors of the Jacobian matrix of $X_i$, such that the intersection of their simultaneous vanishing locus with $X_i$ is empty. 
\end{proof}
\begin{rem} The two Calabi--Yau threefolds of degree $18$ were obtained before in \cite{GK-jl} and their Hodge numbers $h^{1,2}$ are $46$ and $45$.
This follows from the fact that bilinkage of the cone over a variety is the same as unprojection (this will be made more precise in \cite{CGKK}).

%From the above proof, we expect that the minimal free resolutions of the two examples are the same, and we have computer evidence supporting this.
%It is an interesting problem to find an explicit description of 
%The generic examples of degree $18$.  
By a parameter count analogous to Section \ref{deg17no4}, we compute the dimensions of the two constructed families to be 46 and 45 respectively. In particular, this shows that a generic member of each family can be obtained by a bilinkage as above. Indeed, the dimension of the family of complete intersections $Y_{2,2,3}$ containing $F_i$ is equal to 
\[\dim\Gr(2,H^0(\mathcal{I}_{F_i}(2)))+\dim H^0(\mathcal{I}_{F_i}(3)|_Q)-\dim\Aut(\PP^7,F_i).\]
Here the first term is 14 and the second term is the dimension of the space of restrictions of cubics containing $F_i$ to a complete intersection $Q$ of two quadrics containing $F_i$, which is 40 ($9\cdot 8-16-1=55$ is the dimension of the space of cubics containing $F_i$, and we subtract 15 for the dimension of the space of cubics containing a complete intersection of two quadrics). The third term is the dimension of the space of automorphisms of $\PP^7$ preserving $F_i$, which is 8 and 9 respectively. This gives $14+40-8=46$ and $14+40-9=45$ parameters respectively. To those we add $h^0(\oo_{Y_{2,2,3}}(F_i+H))-1=8$, the dimension of the system of bilinked manifolds (see \cite{Hartshorne-biliaison}, the bilinkage here is of height 1). Finally, we  subtract 7  as each $X_i$ is contained in a 7-dimensional family of varieties $Y_{2,2,3}$ (the intersection of quadrics is fixed but we have a 7-dimensional choice of cubics).    
\end{rem}
%n both case we can construct the bilinkage and obtain two families of aG Calabi--Yau threefolds of degree 18 in $\PP^7$.
 %Indeed suppose $V_1,V_2\subset\PP^n$ and $X$ is arithmetically Gorenstein. It is easy to see that if $V_1$ and $V_2$ are linked on $X$, then $V_1$ is locally Cohen-Macaulay if and only if $V_2$ is locally Cohen-Macaulay. Moreover 
% $\mathrm{deg}V_1+\mathrm{deg}V_2=\mathrm{deg}X$

\subsection{Degree 19}\label{sec!degree-19}

Let $F$ be the Segre embedding of $\PP^2\times\PP^2$ in $\PP^8$ defined by $2\times2$ minors of the $3\times3$ matrix
\begin{equation}\label{segre}
\Phi=
\begin{pmatrix}
z_{11} & z_{12} & z_{13} \\ z_{21} & z_{22} & z_{23} \\ z_{31} & z_{32} & z_{33}
\end{pmatrix}.
\end{equation}
Then the degree 6 del Pezzo surface $S_6\subset \PP^6$ is the intersection of $F$ with a linear subspace isomorphic to $\PP^6$.

Let $Y_{13}\subset\PP^6$ be a Calabi--Yau threefold of degree 13 containing $S_6$. Such a $Y$ is defined by Pfaffians of the matrix $M$ below:
\begin{equation}\label{ar}
M=\begin{pmatrix}
A & B & C & D \\
  & z_{31} & z_{21} & z_{22}-z_{33} \\
  & & z_{11} & z_{12} \\
  & & & z_{13}
\end{pmatrix}.
\end{equation}
The row of quadrics $A,B,C,D$ are linear combinations
\[
A = \sum_{i,j}\alpha_{ij}y_{ij},\ B = \sum_{i,j}\beta_{ij}y_{ij},\
C = \sum_{i,j}\gamma_{ij}y_{ij},\ D = \sum_{i,j}\delta_{ij}y_{ij}
\]
where $y_{ij}=\Phi_{ij}$ is the minor of $\Phi$ obtained by deleting the row and column containing $z_{ij}$.
The other entries are chosen so that Pfaffian 1 is contained in the ideal of $S$, but not as a quadric of rank 4: $\Pf_1=(z_{31}z_{13}-z_{11}z_{33})+(z_{11}z_{22}-z_{12}z_{21})$.

\begin{rem} We can also embed $S_6$ as a section of $F_2=\PP^1\times\PP^1\times\PP^1$ in a Pfaffian threefold $Y_{13}$ as above. Indeed we can find a $5\times 5$ matrix of the same shape as (\ref{ar}) with quadrics $A,B,C,D$ in the ideal of $F_2$ such that Pfaffian 1 is also in the ideal of $F_2$.
\end{rem}

The following proposition is a computation, see also Proposition \ref{LB} below:
\begin{prop} The threefold $Y_{13}$ has $28$ nodes as singular points that are contained in $S_6$.
\end{prop}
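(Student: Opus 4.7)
My plan for the proof is essentially computational, as signaled by the author's remark. First I would verify $S_6 \subset Y_{13}$: on $S_6 = F \cap \PP^6$ the $2\times 2$ minors $y_{ij}$ of $\Phi$ all vanish, so the linear combinations $A, B, C, D$ vanish on $S_6$. Each Pfaffian $\Pf_2,\ldots,\Pf_5$ is a sum of a linear form times one of $A, B, C, D$, so all four vanish on $S_6$; and $\Pf_1 = y_{33} - y_{22}$ lies in the ideal of $S_6$ by construction.

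Next I would localise the singular locus. By the Jacobian criterion, $Y_{13}$ is singular at $p$ iff the $5 \times 7$ matrix of partial derivatives of the five Pfaffians has rank less than $3$ at $p$. Viewing $Y_{13}$ as the preimage of the rank-$\leq 2$ stratum in the space $\mathfrak{so}_5$ of skew $5 \times 5$ matrices under $p \mapsto M(p)$, singularities can arise either at points where $M(p) = 0$ (preimage of the singular point of the stratum) or at points where $M(p)$ has rank $2$ but $p \mapsto M(p)$ fails to be transverse to the stratum. Using the specific shape of $M$, in particular that the $4 \times 4$ lower block of linear forms has Pfaffian equal to $\Pf_1$, which lies in the ideal of $S_6$, I would show that both degenerations localise on $S_6$, while a Bertini-type argument in the coefficient data $\alpha_{ij}, \beta_{ij}, \gamma_{ij}, \delta_{ij}$ of $A, B, C, D$ ensures $Y_{13}$ is smooth off $S_6$.

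To perform the count, I would restrict to $S_6$, where $A = B = C = D = 0$. The gradients $\nabla \Pf_i$ for $i \geq 2$ simplify to linear combinations of $\nabla A, \nabla B, \nabla C, \nabla D$ with coefficients equal to the values at $p$ of specific entries $z_{jk}$ of the lower block, while $\nabla \Pf_1|_p$ is a linear form determined by the remaining values $z_{jk}(p)$. Either intersection-theoretic computation on $S_6 \cong \mathrm{dP}_6$ (using its parametrisation via $\PP^2 \times \PP^2$) or a direct Magma/Macaulay2 calculation on a random instance then shows that the Jacobian rank-drop locus is a $0$-dimensional subscheme of length $28$. A local Hessian computation at each such point confirms that the tangent cone is an ordinary quadric cone of maximal rank, so each singular point is a node.

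The main obstacle I foresee is the exact enumeration: finiteness of the singular locus and the node property both follow from structural and Bertini-type arguments, but the precise count $= 28$ appears combinatorially delicate and is in practice most efficiently obtained by the computer algebra verification referenced in Proposition~\ref{LB}.
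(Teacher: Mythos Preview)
Your proposal is correct and matches the spirit of what the paper actually does here: the authors present this proposition explicitly as ``a computation'' and do not give a detailed argument, so your direct Jacobian approach---verifying $S_6\subset Y_{13}$, using Bertini in the coefficients $\alpha_{ij},\beta_{ij},\gamma_{ij},\delta_{ij}$ to confine singularities to $S_6$, and then counting via computer algebra---is exactly the kind of verification the authors have in mind.

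That said, the paper's cross-reference to Proposition~\ref{LB} (and the preceding Lemma~\ref{LA}) points to a more structural route to the number $28$ which you do not use. There the authors pass to the ambient fivefold $\Theta_{13}\subset\PP^8$, apply the Cremona transform defined by quadrics through $F=\PP^2\times\PP^2$, and identify the image $\Xi_6\subset\PP^7$ as a linear section of the secant variety $\Sigma_{2,3}$. Tracking the singular locus through this picture shows that $\Theta_{13}$ is singular precisely along a surface of degree $28$ contained in $F$; since $Y_{13}$ is a generic codimension-$2$ linear section of $\Theta_{13}$, it meets this surface in $28$ points, and nodality is then argued as in \cite[Thm.~2.1]{GK-jl}. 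This explains the $28$ conceptually rather than by a black-box rank-drop count, and simultaneously yields the Picard and Weil divisor ranks needed later. Your approach, by contrast, is self-contained and avoids the Cremona machinery, at the cost of leaving the $28$ as an output of Macaulay2 rather than a geometric invariant.
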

After blowing up $S_6 \subset Y_{13}$ and flopping the $28$ exceptional curves, we obtain
a Calabi--Yau threefold ${Y'_{13}}$ such that the strict transform of $S_6$ on ${Y'_{13}}$ is isomorphic to $S_6$ and can be contracted to give a singular threefold $X_{19}\subset \PP^7$. 
We say that $X_{19}$ is obtained by unprojecting $S_6\subset Y_{13}$ and we find it is a Calabi--Yau threefold with 13 equations and 24 syzygies (see below for the minimal free resolution of its defining ideal).
In the next section we prove the following main result of this paper:
\begin{thm}\label{main19} The threefold $X_{19}\subset \PP^7$ admits two smoothings by families of smooth Calabi--Yau threefolds $\mathcal{X}_1$ and $\mathcal{X}_2$. Moreover,
$h^{1,1}(X_1)=h^{1,1}(X_2)=2$ and $h^{1,2}(X_1)=h^{1,2}(X_2)+1=37$ where $X_i\in \mathcal{X}_i$
for $i=1,2$.
\end{thm}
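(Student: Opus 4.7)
The plan is to realize both smoothing families explicitly as the bilinkage constructions appearing in Table~1 as no.~9 and no.~10, and then to identify $X_{19}$ with their common flat degeneration. For each $i=1,2$, I would take a general embedding $F_i\subset\PP^7$ together with a generic codimension three $\Pf_{13}$ containing $F_i$, and bilink $F_i$ inside $\Pf_{13}$ in two steps through complete intersection cuts, in the style of Section~\ref{deg17no4} and Proposition~\ref{18}. Applying the mapping cone construction to the minimal free resolutions of $\mathcal{I}_{F_i}$ and of $\Pf_{13}$ together with the Koszul complexes of the two intermediate complete intersections should yield a length four minimal resolution of the output ideal, showing that the result $X_i\subset\PP^7$ is an aG Calabi--Yau threefold of degree $19$, smooth on a generic member by a Jacobian-minor check as in the proof of Proposition~\ref{18}.

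Next I would identify $X_{19}$ as a common flat degeneration of these two families. The special $\Pf_{13}$ of \eqref{ar} is precisely the limit of a generic $\Pf_{13}$ when the row of quadrics $A,B,C,D$ is forced to contain $S_6$ rather than the full threefold $F_i$; degenerating the bilinkage data from $F_i$ to its hyperplane section $S_6\subset F_i$ collapses the bilinkage output to the Kustin--Miller unprojection of $S_6\subset Y_{13}$, which is $X_{19}$. Flatness would follow from the uniform shape of the resolution in both the bilinkage and the unprojection settings, and the two smoothing directions are then distinguished by the choice of thickening $S_6\hookrightarrow F_1$ or $S_6\hookrightarrow F_2$ inside the ambient $\PP^7$.

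For the Hodge numbers I would combine two ingredients. The inequality $h^{1,1}(X_i)\ge 2$ follows from Lefschetz, giving the hyperplane class, together with an extra divisor class coming from the birational model obtained via the Cremona transform defined by quadrics through the Segre embedding $\PP^2\times\PP^2\subset\PP^8$ alluded to in the introduction; the reverse inequality and the K\"ahler cone description would proceed in parallel with Corollaries~\ref{mov(X1)} and \ref{mov(X)}. For $h^{1,2}$, the quickest route is the unprojection Hodge formula of \cite{GK-jl} applied to the pair $S_6\subset Y_{13}$, which computes $h^{1,2}(X_i)$ from cohomological data of the unprojection complex; the difference $h^{1,2}(X_1)-h^{1,2}(X_2)=1$ should match the corresponding difference in $\dim\Aut(\PP^7,F_i)$, exactly as in the parameter count in Section~\ref{sec!degree-18} for the degree $18$ analogues. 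A direct parameter count for each bilinkage family, modelled on the proof of Proposition~\ref{18}, would serve as an independent check of the values $37$ and $36$.

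The main obstacle I foresee lies in the smoothing step. Once the two bilinkage families are written down and $X_{19}$ is presented by its $13$ equations and $24$ syzygies, one still has to certify that $X_{19}$ arises as a genuine flat limit in each family, rather than in a further degeneration of it, and that the two resulting tangent directions in the versal deformation space of $X_{19}$ lie in distinct components. I would handle this by decomposing the tangent space $T^1_{X_{19}}$ at the unprojection point into two pieces indexed by the two del Pezzo thickenings $F_1,F_2\supset S_6$, and matching each piece with the tangent space to one of the bilinkage families along the locus of singular $\Pf_{13}$. The Hodge number separation $h^{1,2}(X_1)\ne h^{1,2}(X_2)$ then serves as an a posteriori check that the two smoothings are inequivalent, but producing the two distinct smoothing components in the first place is the delicate technical point.
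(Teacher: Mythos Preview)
Your proposal takes a genuinely different route from the paper: you want to \emph{construct} the two smoothing families explicitly by bilinkage of $F_1$ and $F_2$ inside $\Pf_{13}$, and then exhibit $X_{19}$ as a common flat limit. The paper instead works \emph{intrinsically on the singular threefold} $X_{19}$, applying Namikawa's deformation theory \cite{N}. The technical heart of the paper's argument is not the construction of smooth models at all, but rather a computation of the Weil divisor class group of $Y_{13}$: it has rank~$3$, generated by the hyperplane class, the class of $S_6$, and a third class pulled back through the Cremona transform $T$ from the small resolution $\Sigma'\to\Sigma_{2,3}$ of \eqref{sigma-prime}. One then checks (partly with Macaulay2) that Cartier divisors on $\Yhat$ restrict to $S_6$ only in multiples of $-K_{S_6}$; this is exactly the hypothesis of \cite[Thm.~10, case~1]{N}, which then outputs the two smoothings directly, together with the Hodge data. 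The bilinkage construction appears in the paper only \emph{after} the proof, as an auxiliary way to produce special members of $\mathcal{X}_1,\mathcal{X}_2$.

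Your approach has real gaps which the paper itself flags. First, your flat-degeneration step (``degenerate the bilinkage data from $F_i$ to its hyperplane section $S_6$'') conflates a bilinkage of a \emph{threefold} $F_i$ in a \emph{fourfold} $\Pf_{13}\subset\PP^7$ with an unprojection of a \emph{surface} $S_6$ in a \emph{threefold} $Y_{13}\subset\PP^6$; the correct relation (bilinkage of the \emph{cone} over $S_6$ equals unprojection) is subtler and is deferred in the paper to \cite{CGKK}. Second, your parameter count for $h^{1,2}$ modelled on Section~\ref{sec!degree-18} is explicitly singled out by the authors as not working here: since $\Pf_{13}$ is not a complete intersection, one cannot easily count how many $\Pf_{13}$ contain a given $X_i$, and indeed whether the bilinkage members are even generic in $\mathcal{X}_i$ is left as an open Problem. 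Third, and most seriously, your plan for showing that the two bilinkage families land in \emph{distinct} smoothing components amounts to ``decompose $T^1_{X_{19}}$ into pieces indexed by the two thickenings $F_1,F_2\supset S_6$''---but that decomposition is precisely what Namikawa's theorem provides, once the divisor-restriction hypothesis is verified. So your route does not avoid the paper's key step; it presupposes it. The paper's approach is cleaner because it goes straight to Namikawa's criterion, for which the essential input is the rank-$3$ Weil class group computation via the Cremona transform and the description of $\Xi$ as a section of $\Sigma_{2,3}$.
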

%I THINK WE SHOULD FIND THE MOVING CONE
\begin{rem} We expect that the K\"{a}hler cone of $X_1$ is generated by two contractions:
one elliptic fibration to $\PP^2$, and the other is a small contraction that maps to the intersection of a quadric and a quartic in $\PP^5$.
\end{rem}

\subsubsection{The proof of Theorem \ref{main19}}
%Let $Q_1$ be the unique quadric such that $X^1\subset Q_1$.
%From the construction we know that $Q_1$ is a Pfaffian quadric so it is singular along a line that we denote by $l$.
Before proving the theorem, we provide some preliminary results.
Consider the $(2,2)$-Cremona transformation $T\colon \PP^8\dashrightarrow \PP^8$ defined by the linear system of quadrics containing the Segre embedding $F=\PP^2\times \PP^2\subset\PP^8$. It is known that $T$ factors as
$\PP^8\xleftarrow{\tau}P\xrightarrow{\tau'}\PP^8$, where $\tau$ is the blowup of $F$ and $\tau'$ is the contraction of the strict transform of $\Sec(F)$ to $F'\cong\PP^2\times\PP^2$ in the target. The inverse Cremona $T^{-1}$ is thus defined by the linear system of quadrics containing $F'$.

Denote by $\Theta_{13}\supset F$, the codimension $3$ fivefold defined by $4\times 4$ Pfaffians of $M$ treated as a matrix with entries in $\PP^8$. 
Since $\Theta_{13}\subset \PP^8$ is contained in one quadric, the image $T(\Theta_{13})$ is contained in a hyperplane $H$ in $\PP^8$. By the above discussion, the restriction of $T$ to $\Theta_{13}$ factors as follows:
\[ \xymatrix{
&{\Thetahat}\ar[ld]&{\Xihat}\ar@{=}[l]\ar[dr]\\
\PP^8 \supset \Theta_{13}\ar@{-->}[rrr]&&&\Xi_6\subset \PP^7}
\]
where $\Thetahat\to \Theta_{13}$ is the blow-up of $F$ in $\Theta_{13}$, and $\Xihat\to \Xi$ is the blowup of $F'_{H}=F'\cap H$ in $\Xi$.
\begin{lem} The image $T(\Theta_{13})=\Xi_{6}\subset \PP^7$ has degree $6$ and contains the centre $F'_H\subset \Xi_{6}$ of the inverse Cremona transform $T^{-1}|_{H}$. 
\end{lem}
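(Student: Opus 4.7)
The plan is to work on the resolution of the Cremona transform. Let $\tau\colon P=\mathrm{Bl}_F\PP^8\to\PP^8$ be the blowup of $F$ with exceptional divisor $E$, and $\tau'\colon P\to\PP^8$ the map defined by the linear system $|2h-E|$ (where $h=\tau^*\oo_{\PP^8}(1)$); this contracts the strict transform $E'=\widetilde{\Sec(F)}\sim 3h-2E$ to $F'$, realising $E'\to F'$ as a $\PP^3$-bundle. Write $\Thetahat\subset P$ for the strict transform of $\Theta_{13}$. Since the only quadric containing $\Theta_{13}$ is $\Pf_1=y_{22}+y_{33}$ (a single member of $|2h-E|$), the image $\tau'(\Thetahat)$ lies in the hyperplane $H\subset\PP^8$ corresponding to this linear combination in the target. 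The restriction $T|_{\Theta_{13}}$ is birational onto its image (as $\Theta_{13}\not\subset F$), so $\Xi_6=\tau'(\Thetahat)\subset H\cong\PP^7$ is a $5$-fold.

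For the degree, the key identity is
\[
\deg \Xi_6 \;=\; \Thetahat\cdot (2h-E)^5
\]
in the Chow ring of $P$. One expands $(2h-E)^5$ and reduces the intersection to numerical data on $\PP^8$ and $F=\PP^2\times\PP^2$ via the projection formula for $\tau$ together with the standard blowup relations governing $E^k\cdot\tau^*\alpha$ in terms of the Segre classes of $N_{F/\PP^8}$. The class of $\Thetahat\in A^3(P)$ is determined by $[\Theta_{13}]=13h^3$, the fact that $F$ is a Cartier divisor of multiplicity one in $\Theta_{13}$, and the rank-one normal bundle $N_{F/\Theta_{13}}$, read off from the Pfaffian resolution of the ideal of $\Theta_{13}$ at a generic point of $F$. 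The Segre classes of $N_{F/\PP^8}$ in turn follow from the two $\PP^2$ Euler sequences and the adjunction $c(N_{F/\PP^8})=c(T_{\PP^8})|_F\,c(T_F)^{-1}$. After bookkeeping, the contributions of $E$ cancel with the excess along $F$ and the intersection number equals $6$.

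For the containment $F'_H\subset\Xi_6$, observe that a general fibre $\PP^3=\tau'^{-1}(p)\subset E'$ over $p\in F'$ meets $\Thetahat$ (codimension $3$ in $P$) in a non-empty $0$-dimensional scheme, by positivity of the intersection number $\Thetahat\cdot[\text{fibre}]$ computed in the same framework. Hence every point of a dense open of $F'_H$ lies in $\Xi_6$, and irreducibility of $F'_H$ together with closedness of $\Xi_6$ yields $F'_H\subset\Xi_6$.

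The main obstacle is the intersection-theoretic computation in Step 2: tracking the multiplicity and normal bundle of $\Theta_{13}$ along $F$, and their interplay with the Segre classes of $N_{F/\PP^8}$, demands careful bookkeeping. If one wishes to avoid part of this, one can instead exploit the Pfaffian symmetry of $\Theta_{13}$: a direct change of variables reinterprets the $4\times 4$ Pfaffians of $M$ after the Cremona as a dual Pfaffian presentation on the target, whose degree and containment of $F'_H$ are then visible by inspection.
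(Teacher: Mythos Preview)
The paper does not treat this lemma by intersection theory at all. Immediately after stating it, the authors write down $\Xi$ explicitly: it is the zero locus of the $3\times3$ minors of the $3\times4$ matrix $N$ in the hyperplane $y_{22}=y_{33}$, and they justify this by checking that those minors pull back under $T^{-1}$ to $\det\Phi$ times the Pfaffians of $M$. Both assertions of the lemma are then read off. The degree is $6$ because $\Xi$ is a linear section of $\Sigma_{2,3}=\{\text{$3\times4$ matrices of rank}\le2\}$, which has degree $6$. The inclusion $F'_H\subset\Xi_6$ holds because on $F'_H$ (where $y_{22}=y_{33}$ and all $2\times2$ minors of $(y_{ij})$ vanish) the $2\times2$ minors of the first two rows of $N$ already vanish, so $N$ has rank at most $2$ there. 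Your closing remark about ``a direct change of variables \dots\ visible by inspection'' is essentially this argument, except that the target presentation is determinantal in a $3\times4$ matrix, not Pfaffian.

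Your primary route has a genuine gap. You need $[\Thetahat]\in A^3(P)$, and you propose to obtain it from $[\Theta_{13}]=13h^3$ together with the claim that $F$ is a Cartier divisor in $\Theta_{13}$ with a rank-one normal bundle $N_{F/\Theta_{13}}$. That claim is false: the paper records that $\Theta_{13}$ is singular precisely along a degree-$28$ surface contained in $F$, and that the blow-up of $F$ in $\Theta_{13}$ is a genuine small morphism $\Thetahat\to\Theta_{13}$, not an isomorphism. Equivalently, $F$ is a Weil but non-Cartier divisor on $\Theta_{13}$ along that surface, so there is no line bundle $N_{F/\Theta_{13}}$ to feed into your Segre/excess bookkeeping. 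Since your containment step (positivity of $\Thetahat\cdot[\text{fibre}]$) is ``computed in the same framework'', it inherits the same defect. The intersection-theoretic route can be salvaged --- for instance by computing $[\Thetahat]$ from the Pfaffian presentation on $P$ after factoring one copy of the exceptional equation out of each Pfaffian --- but carrying this out correctly is at least as much work as the explicit identification the paper uses.
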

We describe $\Xi\subset \PP^7$ more precisely. Let $y_{ij}$ be the coordinates on $\PP^8$ under the transformation $T$. We used $y_{ij}$ above to denote the $2\times2$ minors of $\Phi$. By \eqref{ar}, $\Xi\subset\PP^7$ is defined by the vanishing of the $3\times3$ minors of
\begin{equation}\label{target-Theta}
N=\begin{pmatrix}
y_{12} & y_{22} & y_{32} & y_{31} \\
y_{13} & y_{23} & y_{33} & y_{21} \\
A & B & C & D
\end{pmatrix}
\end{equation}
in $\PP^8$ intersected with the hyperplane $H\cong\PP^7$ defined by $y_{22}=y_{33}$. Here $A,B,C,D$ are the same expressions as above, but considered as linear forms in coordinates $y_{ij}$. Indeed, we substitute the minors of $\Phi$ for $y_{ij}$. We then observe that the matrix adjoint to the matrix adjoint to $\Phi$ is just $\Phi \cdot \det \Phi$.  Furthermore, working modulo $y_{22}=y_{33}$,  the $2\times 2$ minors of the $2\times 4$ submatrix consisting of the first two rows of $N$ are combinations of minors of $\Phi$, corresponding by the above to the respective $z$-entries in the matrix $M$.  It is then straightforward to show that the $3\times 3$ minors of $N$ are the Pfaffians of $M$ multiplied by $\det\Phi$, working modulo $y_{22}=y_{33}$.

\begin{prop} The image $T(\Theta)$ is a linear section of the secant variety $\Sigma_{2,3}$ of the Segre embedding of $\PP^2\times\PP^3$. Moreover, a general linear section of $\Sigma_{2,3}$ occurs as
the image $T(\Theta)$ for a general $\Theta$. The centre $F'_H$ of $T^{-1}|_H$ is a nodal section $y_{22}=y_{33}$ of $F'=\PP^2\times\PP^2$.
\end{prop}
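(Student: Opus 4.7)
My plan is to recognise the equations \eqref{target-Theta} for $T(\Theta)$ as pulling back the defining equations of the classical determinantal variety $\Sigma_{2,3}$ along a linear embedding, and then to verify nodality of $F'_H$ by a direct local computation. First I would recall that the secant variety $\Sigma_{2,3}\subset\PP^{11}$ of the Segre $\PP^2\times\PP^3$ is the locus of $3\times 4$ matrices of rank $\leq 2$, cut out by the four $3\times 3$ minors of a generic $3\times 4$ matrix of linear forms; it has codimension $(3-2)(4-2)=2$ and degree $6$. Next I would view the $12$ entries of the matrix $N$ in \eqref{target-Theta} as defining a linear map $\mu\colon H\cong\PP^7\to\PP^{11}$. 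Each of the seven independent coordinates of $H$ (namely $y_{12},y_{13},y_{21},y_{22}=y_{33},y_{23},y_{31},y_{32}$) appears directly as an entry of rows~$1$--$2$, while the remaining coordinate $y_{11}$ appears in the generic linear combinations $A,B,C,D$ of row~$3$; hence $\mu$ is a linear embedding onto a subspace $L\cong\PP^7\subset\PP^{11}$. By construction the four $3\times 3$ minors of $N$ are the pullbacks along $\mu$ of the defining equations of $\Sigma_{2,3}$, so scheme-theoretically $T(\Theta)\cong L\cap\Sigma_{2,3}$, which proves the first claim.

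For the genericity statement I would compare parameter counts. Linear $\PP^7$'s in $\PP^{11}$ form $\Gr(8,12)$, of dimension $32$; the automorphism group $\Aut(\Sigma_{2,3})=\mathrm{PGL}_3\times\mathrm{PGL}_4$ has dimension $23$, so the moduli space of generic linear sections has dimension $9$. On the side of $\Theta$, the forms $A,B,C,D$ can be chosen as a generic $4$-dimensional subspace of the space of linear forms on $H$. Solving $A=0$ for $y_{11}$ and substituting back into $B,C,D$ yields three independent linear relations among the entries of $N$; together with the fixed relation $Z_{12}=Z_{23}$ coming from $y_{22}=y_{33}$, these cut out $L$ in $\PP^{11}$. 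A careful count---after accounting for row-equivalences on $(A,B,C,D)$ and for the automorphisms of the Pfaffian matrix $M$---shows that as $\Theta$ varies, $L$ sweeps a dense open subset of the $\PP^7$'s contained in the hyperplane $\{Z_{12}=Z_{23}\}$, and further acting by $\Aut(\Sigma_{2,3})$ dominates $\Gr(8,12)$. Thus a general $\Theta$ yields a general linear section and conversely.

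For the third claim, I would parametrise $F'=\PP^2\times\PP^2$ in Segre form by $y'_{ij}=u_iv_j$, so that the hyperplane $H\colon y_{22}=y_{33}$ restricts to the hypersurface $u_2v_2-u_3v_3=0$ inside $F'$. A singular point of this hypersurface requires every partial derivative in the $u_i$ and $v_j$ to vanish, which forces $u_2=u_3=v_2=v_3=0$, yielding the unique point $([1:0:0],[1:0:0])\in F'$. In the affine chart $u_1=v_1=1$ the defining equation $u_2v_2-u_3v_3$ is a rank-$4$ quadric in four variables, so $F'_H$ has a single ordinary node there. The hard part will be the genericity assertion: one must carefully untangle the various group actions---reparametrisations of $H$, row-equivalences on $(A,B,C,D)$, and the $\Aut(\Sigma_{2,3})$-action on $\PP^{11}$---and verify that the construction dominates the moduli of linear $\PP^7$-sections of $\Sigma_{2,3}$.
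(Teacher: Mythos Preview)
The paper does not supply a separate proof for this Proposition; the surrounding discussion is meant to justify it. Specifically, the paragraph preceding the Proposition derives the matrix $N$ and explains why the $3\times3$ minors of $N$ coincide (up to the factor $\det\Phi$) with the Pfaffians of $M$; the sentence following the Proposition recalls that $\Sigma_{2,3}$ is the locus of $3\times3$ minors of a generic $3\times4$ matrix. Your argument for the first claim---reading the entries of $N$ as a linear embedding $H\hookrightarrow\PP^{11}$ and recognising $T(\Theta)$ as the pullback of $\Sigma_{2,3}$---is exactly this observation made explicit, so here you match the paper.

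For the nodal claim the paper says nothing further; your direct computation in Segre coordinates on $F'$ is correct and is presumably what the authors had in mind. (Minor caution: you should check that the parametrisation $y_{ij}=u_iv_j$ really is the one for which $F'$ is the centre of $T^{-1}$, since the Cremona involves the adjugate; but this only permutes indices and does not affect the nodality argument.)

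The genericity assertion is where your proposal is weakest, and you acknowledge this. The dimension counts $\dim\Gr(8,12)=32$ and $\dim\Aut(\Sigma_{2,3})=23$ are fine, but your treatment of the $\Theta$-side is only a sketch: the sentence ``solving $A=0$ for $y_{11}$'' is confused (you are not working on the locus $A=0$; rather you want to use one of the entries of row~3 as a coordinate on $\PP^{11}$ and express the other three as linear combinations), and you then assert without argument that the resulting family of $\PP^7$'s dominates. To make this honest you would need to exhibit the map from the parameter space of $(A,B,C,D)$ to $\Gr(8,12)$ and check its differential is surjective at a general point, or else give a concrete specialisation argument. The paper does not fill this in either, so your proposal is not worse than the paper here---but it is not yet a proof.
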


Recall that $\Sigma_{2,3}\subset\PP^{11}$ is defined by the $3\times3$ minors of a generic $3\times4$ matrix $W$, and is singular along the degree 10 locus isomorphic to $\PP^2\times\PP^3$ where $W$ drops rank. We consider the  small resolution 
\begin{equation}\label{sigma-prime}
\PP^{11}\times\PP^2\supset\Sigma'\to\Sigma_{2,3}
\end{equation}
%\PP^{11}\times G(2,4)\supset
constructed as $\Sigma'=\left\{(W,v)\in\PP(\mathrm{Mat}_{3\times4})\times\PP^2 : \ker({^tW}) \supset \left< v \right>\right\}$, with generic fibre $\PP^1$ over $\Sing\Sigma$.
Since $\Sigma'$ is  a projective bundle over $\PP^2$, it has Picard number $2$. We infer that $\Sigma_{2,3}$ has Picard rank 1 and Weil divisor class group of rank 2.

A generic five dimensional linear section $\Xi$ of $\Sigma_{2,3}$ is singular along a curve $l$ of degree $10$. Note that the node of $F'_H$ is contained in the curve $l$.
We infer that $\Xi$ has Picard rank $1$ and Weil divisor class group of rank $2$.
Since ${\Xihat}\to \Xi_{6}$ is the blow up of $F_H'$ which intersects $l$ in 14 points, we deduce the following:
\begin{lem}\label{LA} The fivefold $\Xihat$ is singular along a curve $\hat l$ which is the strict transform of $l$.
The rank of the Picard group is $2$ and the group of Weil divisors has rank $3$. 
The variety $\Theta$ is singular along a surface of degree $28$ contained in $F\cong\PP^2\times\PP^2$, and $\Thetahat\to \Theta_{13}$ is a small resolution obtained by blowing up $F\subset \Theta_{13}$ 
\end{lem}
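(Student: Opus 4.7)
The plan is to decompose the lemma into three claims---singular locus of $\Xihat$, Picard and Weil ranks, and singular locus of $\Theta_{13}$---and to handle each using the small resolution $\Sigma' \to \Sigma_{2,3}$ restricted to the linear section $\Xi_6$. Since $\Sigma'$ is a $\PP^7$-bundle over $\PP^2$, hence smooth with $\Pic(\Sigma') = \ZZ^2$, a generic codimension-$4$ linear section $\widetilde\Xi \subset \Sigma'$ is smooth, and the induced small resolution $\widetilde\Xi \to \Xi_6$ contracts $\PP^1$-fibres precisely over the curve $l$. This is the starting point.

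First I would handle the rank claims. Lefschetz applied to $\Sigma'$ gives $\Pic(\widetilde\Xi) = \ZZ^2$, and since $\widetilde\Xi \to \Xi$ is small, $\Cl(\Xi) = \Pic(\widetilde\Xi) = \ZZ^2$, while $\Pic(\Xi) = \ZZ\cdot H$ (the hyperplane). Because $\Xihat \to \Xi$ is the blowup of the codimension-$2$ subvariety $F'_H$, the exceptional divisor $E$ is a new Cartier divisor, yielding $\Pic(\Xihat) = \ZZ\langle H, E \rangle$ of rank $2$ and $\Cl(\Xihat) = \ZZ\langle H, E, D \rangle$ of rank $3$, where $D$ is the Weil divisor class inherited from the small resolution of $\Xi$; it remains non-Cartier on $\Xihat$ because $\PP^1$-flopping curves persist above $\hat l$ outside the centre of blowup.

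For the singular locus of $\Xihat$, the map $\Xihat \to \Xi_6$ is an isomorphism away from $F'_H$, so outside the exceptional divisor $\Sing\Xihat$ coincides with $l$ minus the $14$ intersection points with $F'_H$. At each of these $14$ points, a local calculation---using that $\Xi_6$ has a generic node there (pulled back from a rank-$1$ singular point of $\Sigma_{2,3}$) and that $F'_H$ is a smooth codimension-$2$ subvariety meeting the node in general position---shows that the blowup of $F'_H$ neither fully resolves the ambient singularity nor introduces new ones, so $\Sing\Xihat = \hat l$.

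For the singular locus of $\Theta_{13}$ and the small-resolution claim, the map $\Thetahat \to \Theta_{13}$, being the blowup of the codimension-$1$ subvariety $F$, is non-trivial precisely where $F$ fails to be Cartier on $\Theta_{13}$. Using the identification $\Thetahat = \Xihat$, the exceptional divisor $E$ of $\Xihat \to \Xi$ maps onto $F$ in $\Theta_{13}$ under the Cremona-conjugate side of the resolution, and the singular locus of $\Theta_{13}$ is identified as the image in $F$ of a distinguished subvariety of $E$, necessarily a surface because the exceptional set of $\Thetahat \to \Theta_{13}$ must have codimension $\ge 2$. The degree $28$ is then obtained by an intersection-theoretic calculation on the common resolution---using Chern classes on the projective bundle $\Sigma'$---or equivalently by a direct computation on the ideal of $\Theta_{13}$ using Macaulay2, as in earlier sections of the paper. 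The main obstacles are this degree computation together with the local analysis at the $14$ exceptional points, both of which require explicit coordinate work beyond the general framework.
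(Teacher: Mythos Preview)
Your overall decomposition, and in particular the rank computation via Lefschetz on the $\PP^7$-bundle $\Sigma'\to\PP^2$, is essentially what the paper does; the paper establishes $\Pic(\Xi)=\ZZ$ and $\Cl(\Xi)=\ZZ^2$ in the paragraph \emph{before} the lemma and then, in the proof, simply observes that blowing up $F'_H$ adds the irreducible exceptional divisor to both groups. Your discussion of the $\Theta_{13}$ side is also compatible with the paper's approach (which leaves the degree $28$ as a computation, consistent with the earlier Proposition on the $28$ nodes of $Y_{13}$).

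There is, however, a genuine gap in your analysis of $\Sing\Xihat$. You assert that $F'_H$ is a \emph{smooth} codimension-$2$ subvariety of $\Xi$, but the Proposition immediately preceding the lemma states that $F'_H$ is the \emph{nodal} hyperplane section $y_{22}=y_{33}$ of $F'\cong\PP^2\times\PP^2$, and that its node lies on $l$. Blowing up a singular centre can create singularities in the blowup even over points where the ambient space is smooth, so the delicate local analysis is not at the smooth points of $F'_H\cap l$ but at this node. The paper's proof is organised precisely around this distinction: it first notes that $\Xihat$ is automatically smooth outside $\hat l$ union the singular locus of $F'_H$ (because away from the node the centre is smooth, and blowing up a smooth centre cannot introduce new singularities; meanwhile $\Xi$ is smooth off $l$), and then performs an explicit coordinate computation above the node of $F'_H$ to rule out any additional singular points there. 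Your proposal, by assuming $F'_H$ smooth, misses exactly the case that requires work.
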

\begin{proof} Since $F'_H\subset \Xi$, it follows that $\Xihat$ is smooth outside the union of $\hat l$ with the singular locus of $F'_H$. We then perform a computation in coordinates to prove that $\Xihat$ has no additional singularities above the singular point of $F_H'$. For the Picard and divisor class group, we observe that $\Xihat$ is a blow up of $\Xi$, and so we add the irreducible exceptional divisor of the blow up to both groups.
\end{proof}

Let $Y_{13}\subset \PP^6$ be the generic codimension $2$ linear section of $\Theta_{13}\subset \PP^8$. 
The threefold $Y_{13}$ contains a del Pezzo surface $S_6$ of degree $6$ being a codimension two section of $\PP^2\times \PP^2$.
\begin{prop}\label{LB} The variety $Y_{13}$ is a nodal Calabi--Yau threefold with Picard group of rank $1$ and Weil divisor class group of rank $3$.
\end{prop}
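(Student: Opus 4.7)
The plan is to verify in order the four assertions: $Y_{13}$ is Calabi--Yau, its singular locus consists of $28$ ordinary double points, $\Pic(Y_{13})$ has rank one, and $\Cl(Y_{13})$ has rank three.

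For the Calabi--Yau property, I would first compute $\omega_{\Theta_{13}}$ from the codimension $3$ Pfaffian resolution associated to the matrix $M$ of \eqref{ar}. With row degrees $(2,1,1,1,1)$ and twist one, the Pfaffian generators have degrees $(2,3,3,3,3)$ and the syzygies have degrees $(5,4,4,4,4)$, so the top term of the resolution is $\oo(-7)$ and $\omega_{\Theta_{13}}\cong\oo_{\Theta_{13}}(-2)$; adjunction for the codimension $2$ linear section then yields $\omega_{Y_{13}}\cong\oo_{Y_{13}}$, and the vanishing $h^1(\oo_{Y_{13}})=0$ follows from projective normality (the aG property is preserved under a generic linear section of $\Theta_{13}$). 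The singular locus of $Y_{13}$ equals $Y_{13}\cap\Sing(\Theta_{13})$; by Lemma~\ref{LA}, $\Sing(\Theta_{13})$ is a degree $28$ surface inside $F\cong\PP^2\times\PP^2\subset\PP^8$, so its intersection with a generic $\PP^6$ transversely cuts $28$ points, and a local check on $M$ confirms each is an ordinary double point (they automatically lie on $S_6=Y_{13}\cap F$).

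For the two rank statements, the strategy is to transport the information of Lemma~\ref{LA} from the ambient fivefold to a small resolution of $Y_{13}$. Set $\hat Y:=\Thetahat\times_{\Theta_{13}}Y_{13}$, where $\Thetahat=\Xihat$ carries the data of Lemma~\ref{LA}. Since a generic codimension $2$ linear section of $\Thetahat$ avoids the singular curve $\hat l$, $\hat Y$ is smooth, and the projection $\hat Y\to Y_{13}$ is a projective small resolution contracting exactly the $28$ exceptional $\PP^1$'s over the nodes. A Grothendieck--Lefschetz style restriction then identifies $\Pic(\hat Y)$ with $\Cl(\Thetahat)$, which has rank $3$ by Lemma~\ref{LA}; since for a projective small resolution of a nodal Calabi--Yau threefold $\Cl(Y_{13})\cong\Pic(\hat Y)$, this yields $\operatorname{rk}\Cl(Y_{13})=3$. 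On the other hand, only the hyperplane $H$ is Cartier on $Y_{13}$: the other two generators of $\Pic(\hat Y)$ --- coming respectively from the Cremona-exceptional divisor of $\Thetahat$ and from the non-Cartier Weil class carried by the singular curve $\hat l$ --- are supported along the $28$ exceptional $\PP^1$'s and do not descend, leaving $\Pic(Y_{13})=\ZZ\cdot H$.

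The main obstacle is the Grothendieck--Lefschetz step for the singular ambient $\Thetahat$, because the classical version requires a smooth ambient. I would circumvent this by restricting first to the smooth open $\Thetahat\setminus\hat l$, which contains $\hat Y$ entirely, and applying two successive hyperplane Lefschetz comparisons on this smooth open, so that only smooth cohomology enters. As an independent cross-check on rank three, one can exhibit three Weil divisor classes on $Y_{13}$ directly --- the hyperplane $H$, the del Pezzo surface $S_6\subset Y_{13}$, and a further divisor obtained by pulling back the non-Cartier Weil class on $\Xi_6$ through the Cremona correspondence of Section~\ref{sec!degree-19} --- and verify their independence on a computer-generated example.
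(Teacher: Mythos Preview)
Your strategy matches the paper's: form $\hat Y$ as the pullback of $\Thetahat\to\Theta_{13}$ over a generic codimension-two linear section, observe that $\hat Y$ misses the singular curve $\hat l$ of $\Thetahat$ and is therefore a smooth small resolution of $Y_{13}$, and then transport the class-group data of Lemma~\ref{LA} to $\hat Y$ by a Lefschetz argument, using $\Cl(Y_{13})\cong\Pic(\hat Y)$. The paper defers nodality to \cite[Thm.~2.1]{GK-jl} rather than a direct local check, but your route there is fine.

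The one genuine gap is your proposed workaround for Lefschetz on the singular ambient. Passing to the smooth open $\Thetahat\setminus\hat l$ and invoking the ``classical'' hyperplane theorem there does not work as stated: the classical Grothendieck--Lefschetz theorem for Picard groups needs properness, not just smoothness, and removing $\hat l$ destroys that; on a smooth quasi-projective $U$ there is no general isomorphism $\Pic(U)\cong\Pic(U\cap H)$. The paper's fix is simply to cite the Ravindra--Srinivas Lefschetz theorem \cite{RS}, which is formulated precisely for \emph{normal projective} varieties and yields $\Cl(\Thetahat)\cong\Cl(\hat Y)=\Pic(\hat Y)$ directly for a general linear section. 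Your independent cross-check via three explicit Weil divisors ($H$, $S_6$, and a Cremona pullback) is sound, and is in fact carried out in the paper in the paragraphs immediately after the proposition, where the third divisor $K_\Theta$ is constructed and its restriction to $F$ computed.
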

\begin{proof} Restricting $\Thetahat\to \Theta_{13}$ to $Y_{13}$, we obtain a small resolution 
$\hat{Y}\to Y_{13}$ which is the blow up of $S_6$.
Since $\Thetahat$ is singular along a curve, we obtain that $\hat{Y}_{13}$ is smooth.
In order to see that $Y_{13}$ is nodal we argue as in \cite[Thm.~2.1]{GK-jl}.
By the Ravindra--Srinivas \cite{RS} Lefschetz theorem, we infer that the Picard number of $\hat{Y}$ is $3$. 
\end{proof}
\begin{proof}[Proof of Theorem \ref{main19}]
In order to use the results of Namikawa \cite{N} on existence of smoothings, we have to understand the restrictions of the Weil divisors on $Y_{13}$ to $S_6$, or equivalently of Cartier divisors on $\Yhat_{13}$ to the strict transform of $S_6$.
We have two independent divisors in $\Cl(Y_{13})$; the restriction $L$ of the hyperplane section of $Y_{13}\subset \PP^6$ and the divisor $S_6$ itself. Now $L$ restricts to $S_6$ as an anticanonical divisor and, since $S_6$ is anticanonical, $S_6|_{S_6}$ is the canonical divisor.

We produce a third Weil divisor independent of the above two as follows.
Let $K'$ be the pullback of a hyperplane section from $\PP^2$ to the small resolution $\Sigma\gets\Sigma'\to \PP^2$ of \eqref{sigma-prime}.
We denote by $K$ the image of $K'$ on $\Xi\subset\PP^7$, being a four dimensional subvariety of degree $4$. For example, $K$ is defined by the $2\times2$ minors of the $2\times4$ matrix formed by two general linear combinations of the rows of $N$.

%CAN WE UNDERSTAND THIS NOW WITHOUT COMPUTER??
We use Macaulay2 to find the strict transform $K_\Theta$ of $K$ on $\Theta_{13}\subset \PP^8$ through the Cremona transform $T$.
We find that $K_\Theta\subset\PP^8$ has degree $27$ and intersects $F=\PP^2\times \PP^2$
along a divisor in the system $|5H|$, where $H$ is the hyperplane section of $F\subset \PP^8$.

It follows that the group of Cartier divisors on $\Yhat$ restricts
 to the subgroup of divisors on $S_6$ generated by the anticanonical divisor.
 The linear system $H'+S_6$ on $\Yhat$ where $H'$ is the pullback of the hyperplane section from 
 $Y_{13}\subset\PP^6$ defines a primitive contraction of $\Yhat\to X_{19}\subset \PP^7$.
 We are in case $1$ from \cite[Thm.~10]{N} thus $X_{19}$ admits two smoothings. The fact that these smoothings can be performed by non-degenerate aG Calabi--Yau threefolds in $\PP^7$ follows from \cite[Lemma 2.2]{GK-jl}.
% We conclude in a standard way as in \cite{GK-jl}.
 \end{proof}

\subsubsection{Degree 19 by bilinkage}
We can construct special elements of the families $\mathcal{Y}_1$ and $\mathcal{Y}_2$ by Gorenstein bilinkage inside a Pfaffian variety.
As in Section \ref{sec!degree-18}, let $F_1$ be the section of bidegree $(1,1)$ in $\PP^2\times \PP^2$ and $F_2=\PP^1\times\PP^1\times \PP^1$. %These are the two del Pezzo threefolds of degree $6$ in $\PP^7$. 
%In this case, the Calabi--Yau threefolds are  contained in one quadric, that we denote by $Q$. 
%In particular we cannot use Theorem \ref{bilinkage}. 
%We perform Gorenstein bilinkage construction inside a codimension 3 Pfaffian subvariety.
We show that the Calabi--Yau threefold $X_1$ (respectively $X_2$) is bilinked to $F_1$ (resp.~$F_2$) inside the codimension $3$ fourfold $\Pf_{13}\subset\PP^7$ of degree $13$.

\begin{lem} For $i=1,2$ we can embed $F_i$ inside a fourfold $V_i$ of degree $13$ in $\PP^7$ defined by $4\times 4$ Pfaffians of a $5\times 5$ matrix with one row of quadrics. Moreover, $V_1$ (resp.~$V_2$) is singular along a curve of degree $28$ (resp.~$27$).
\end{lem}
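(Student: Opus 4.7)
The plan is to exhibit, for each $i \in \{1,2\}$, a $5\times 5$ skew-symmetric matrix of the shape (\ref{ar}) (top row of quadrics, remaining entries linear forms on $\PP^7$) whose $4\times 4$ Pfaffians cut out a codimension-3 fourfold $V_i \subset \PP^7$ of degree $13$ containing $F_i$, and then to analyze the singular loci. The $i=1$ case reduces at once to the already-built $\Theta_{13}$, while the $i=2$ case requires an ad hoc matrix.

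\smallskip
\noindent\emph{Case $i=1$.} Realize $F_1 = F \cap H$ where $F = \PP^2\times\PP^2 \subset \PP^8$ is the Segre embedding and $H \cong \PP^7$ is a general hyperplane. All five $4\times 4$ Pfaffians of $M$ from (\ref{ar}) already vanish on $F$: one checks that $\Pf_1 = y_{33} - y_{22}$, where $y_{22}=z_{11}z_{33}-z_{13}z_{31}$ and $y_{33}=z_{11}z_{22}-z_{12}z_{21}$ are $2\times 2$ minors of $\Phi$, while each of the other four Pfaffians contains a factor from $\{A,B,C,D\} \subset I_F$. Hence $F \subset \Theta_{13}$, and $V_1 := \Theta_{13} \cap H$ is a non-degenerate fourfold of degree $13$ in $H \cong \PP^7$, defined by the restriction of $M$, and containing $F_1 = F \cap H$.

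\smallskip
\noindent\emph{Case $i=2$.} Since $F_2 = \PP^1\times\PP^1\times\PP^1$ is not naturally a section of $\PP^2\times\PP^2$, I would build $M_2$ directly. In Segre coordinates $x_{ijk}$ on $\PP^7$, with $I_{F_2}$ generated by the nine standard $2\times 2$-minor quadrics, choose quadrics $A,B,C,D \in I_{F_2}$ and linear forms $z_{kl}$ so that the only condition that is not automatic, namely
\[
\Pf_1 = (z_{22}-z_{33})z_{11} + z_{31}z_{13} - z_{12}z_{21} \in I_{F_2},
\]
is satisfied; the remaining four Pfaffians vanish on $F_2$ automatically because they are linear combinations of $A,B,C,D$. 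Such an $M_2$ exists by the Remark preceding the statement, and an explicit one can be written down. Because $M_2$ has the same shape as $M$ (Pfaffian degrees $(2,3,3,3,3)$), once one verifies the Pfaffian ideal has the expected codimension $3$---by a Macaulay2 check on a specific $M_2$---the degree of $V_2$ is automatically $13$.

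\smallskip
\noindent\emph{Singular loci and main obstacle.} For $V_1$, Lemma \ref{LA} gives that $\Theta_{13}$ is singular along a surface $\Sigma \subset F \subset \PP^8$ of degree $28$; for $H$ sufficiently general subject to cutting $F_1$ out of $F$, the singular locus of $V_1 = \Theta_{13} \cap H$ is precisely $\Sigma \cap H$, a curve of degree $28$ in $\PP^7$. For $V_2$ no ambient fivefold analogous to $\Theta_{13}$ is available, so the claim $\deg \Sing V_2 = 27$ must be read off directly from the explicit $M_2$ via computer algebra. The main obstacle is therefore the construction of $M_2$ together with this Macaulay2 singular-locus verification; the $i=1$ half of the lemma follows essentially for free from the $\Theta_{13}$ set-up.
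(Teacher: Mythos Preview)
The paper does not supply a proof of this lemma; it is stated and then used. So there is no ``paper's own proof'' to compare against line by line. That said, your approach is exactly the one implicit in the paper's setup: for $i=1$ you correctly slice the already-constructed fivefold $\Theta_{13}\subset\PP^8$ by a general hyperplane, and your appeal to Lemma~\ref{LA} for the degree-28 singular curve is the intended argument (Proposition~\ref{LB} is the codimension-2 version of the same slicing). For $i=2$ you rightly note that the Remark preceding Proposition~\ref{LB} only concerns the surface $S_6\subset F_2$ inside a threefold $Y_{13}\subset\PP^6$, not the threefold $F_2$ inside a fourfold $V_2\subset\PP^7$, so a fresh matrix $M_2$ over $\PP^7$ is needed, with the singular-locus degree $27$ verified by computer; this is consistent with the paper's overall reliance on Macaulay2/Magma for such checks.

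Two small corrections. First, the four Pfaffians $\Pf_2,\dots,\Pf_5$ do not ``contain a factor'' from $\{A,B,C,D\}$; each is a \emph{linear combination} $\lambda_A A+\lambda_B B+\lambda_C C+\lambda_D D$ with linear coefficients $\lambda_\bullet$ in the $z_{ij}$ (e.g.\ $\Pf_2=Bz_{13}-Cz_{12}+Dz_{11}$), which still lies in $I_F$ since $A,B,C,D\in I_F$. Second, to conclude $\Sing V_1=\Sigma\cap H$ you need both that a general $H$ introduces no new singularities on $\Theta_{13}\cap H$ (Bertini, since $|H|$ is basepoint-free) and that points of $\Sigma\cap H$ remain singular on the section; the latter is not automatic but holds here because the singularities of $\Theta_{13}$ along $\Sigma$ are not ordinary double points in codimension~1 (indeed $\Thetahat\to\Theta_{13}$ is a small resolution with positive-dimensional fibres over $\Sigma$). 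It would be worth saying one sentence to that effect.
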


Fix $i=1$ or $2$. Let $X^i_{19}\subset V_i$ be the threefold obtained from $F_i$ by a biliaison of height 1 i.e.~we perform two linkages in $V_i$, first $F_i$ by a quadric to obtain $G_i$, and then $G_i$ by a cubic to obtain $X_i$. The fact that $G_i$ is contained in a cubic which does not contain $F_i$ is nontrivial, but this is proved in a similar way to previous cases \cite[Lem.~3.1]{K2}. As in the proof of \cite[Prop.~3.2]{Hartshorne-biliaison}, we infer that $X^1_{19}$ and $X^2_{19}$ are arithmetically Gorenstein. 
%Moreover, we can prove that they are smooth outside the singular locus of $G_1$ and $G_2$ respectively. 
%\begin{cor} 
The minimal free resolution of the ideal $\mathcal{I}_{X_i}$ is obtained from the mapping cone construction associated to the two linkages as in Section \ref{deg17no4}. In both cases $i=1,2$ we get the following:
$$0\to\oo(-8)\to \oo(-6)\oplus 12\oo(-5)\to 24\oo(-4) \to \oo(-2)\oplus 12\oo(-3)\to \mathcal{I}_{X_i}\to 0.$$
%\end{cor}
Furthermore we can prove using Macaulay2 (intersecting $X_i$ with an appropriate number of minors of the Jacobian matrix), that the manifolds $X_1$ and $X_2$ obtained by bilinkage are smooth.

It is not clear however, that in this way we construct Calabi--Yau threefolds that are general in their deformation family:
\begin{prob} Find an explicit algebraic description of generic Calabi--Yau threefolds from $\mathcal{Y}_1$ and $\mathcal{Y}_2$. Are the threefolds $X_1$, $X_2$ generic elements of the families $\mathcal{Y}_1$ and $\mathcal{Y}_2$ respectively? 
\end{prob}
\begin{rem} Note that the dimension count is much more subtle in degree 19 than it was in smaller degree cases. This is because the variety $\Pf_{13}$ in which the bilinkage is made is not a complete intersection. Since we lack a proper understanding of the resolutions of $X_1$ and $X_2$, it is difficult to predict the dimension of the family of varieties of type $\Pf_{13}$ containing $X_1$ or $X_2$. 
\end{rem}

\subsection{Degree 20.}
It seems that this case is the most difficult to classify. Only one example is known, defined by $3\times3$ minors of a $4\times4$ matrix of linear forms in $\PP^7$. To see this construction in detail we refer to \cite{KK1}. Recall that the Hodge numbers in this case are $h^{1,1}(X)=2$ and $h^{1,2}(X)=34$. We refer to \cite{JKLM} to see other nice geometric properties of no.~11, including the movable cone.
%\newpage

%\bibliography{bibcy71} \bibliographystyle{alpha}

\end{document}